\documentclass{amsart}
\usepackage{amssymb}
\usepackage[mathcal]{euscript}
\usepackage[cmtip,all]{xy}


\newcommand{\bydef}{:=}

\newcommand{\id}{\mathrm{id}}


\newcommand{\trace}{\mathrm{trace}}





\newcommand{\cC}{\mathcal{C}}

\newcommand{\cL}{\mathcal{L}}

\newcommand{\cU}{\mathcal{U}}
\newcommand{\cV}{\mathcal{V}}

\newcommand{\cX}{\mathfrak{X}}

\newcommand{\frg}{{\mathfrak g}}
\newcommand{\frh}{{\mathfrak h}}

\newcommand{\frmm}{{\mathfrak m}} 



\newcommand{\ZZ}{\mathbb{Z}}

\newcommand{\RR}{\mathbb{R}}
\newcommand{\CC}{\mathbb{C}}

\newcommand{\bS}{\mathbb{S}}


\DeclareMathOperator{\Hom}{\mathrm{Hom}}
\DeclareMathOperator{\End}{\mathrm{End}}

\DeclareMathOperator{\Aut}{\mathrm{Aut}}
\DeclareMathOperator{\Der}{\mathrm{Der}}

\DeclareMathOperator{\Mat}{\mathrm{Mat}}


\newcommand{\ad}{\mathrm{ad}}
\newcommand{\Ad}{\mathrm{Ad}}

\newcommand{\frgl}{{\mathfrak{gl}}}

\newcommand{\frsu}{{\mathfrak{su}}}


\newcommand{\GL}{\mathrm{GL}}









\newcommand{\subo}{_{\bar 0}}
\newcommand{\subuno}{_{\bar 1}}

\newcommand{\bL}{\boldsymbol{L}}

\newtheorem{theorem}{Theorem}
\newtheorem{proposition}[theorem]{Proposition}
\newtheorem{lemma}[theorem]{Lemma}
\newtheorem{corollary}[theorem]{Corollary}

\theoremstyle{definition}
\newtheorem{definition}[theorem]{Definition}
\newtheorem{example}[theorem]{Example}
\newtheorem{examples}[theorem]{Examples}

\theoremstyle{remark}
\newtheorem{remark}[theorem]{Remark}
\newtheorem{exercise}{Exercise}

\newenvironment{romanenumerate}
 {\begin{enumerate}
 
 }{\end{enumerate}}

\begin{document}

\title[Homogeneous spaces and nonassociative algebras]{Reductive homogeneous spaces\\ 
and nonassociative algebras}

\author[Alberto Elduque]{Alberto Elduque${}^\star$}
\address{Departamento de Matem\'{a}ticas
 e Instituto Universitario de Matem\'aticas y Aplicaciones,
 Universidad de Zaragoza, 50009 Zaragoza, Spain}
\email{elduque@unizar.es}
\thanks{${}^\star$Supported by grants MTM2017-83506-C2-1-P (AEI/FEDER, UE) and E22\_17R 
(Gobierno de Arag\'on, Grupo de referencia ``\'Algebra y
Geometr{\'\i}a'', cofunded by Feder 2014-2020 ``Construyendo Europa desde Arag\'on'')}

\subjclass[2010]{Primary 17B60; Secondary 53B05, 53C30, 22F30, 17A99}

\keywords{Reductive homogeneous space; invariant affine connection; Lie-Yamaguti algebra}

\date{}

\begin{abstract}
The purpose of these survey notes is to give a presentation of a classical theorem of Nomizu \cite{Nom54} that relates the invariant affine connections on reductive homogeneous spaces and nonassociative algebras.
\end{abstract}

\maketitle


These notes have been around for about 20 years. They were polished on the occasion of a CIMPA research school, held in Marrakech (April 13-24, 2015), where an initial version became the notes of a course with the same title in this school. The title of the school was `G\'eom\'etrie diff\'erentielle et alg\`ebres non associatives'. The author is indebted to the participants in the course for their questions and comments, that improved the initial exposition.

The purpose of the notes is to present a classical result by Nomizu \cite{Nom54} that relates the invariant affine connections on reductive homogeneous spaces to nonassociative algebras defined on the tangent space of a point. This allows us to give precise algebraic descriptions of these connections, of their torsion and curvature tensors, ... 

Nomizu's result constitutes a very nice bridge between these two areas mentioned in the title of the school.

Modulo basic results, the presentation is self contained. The first three sections review the results on smooth manifolds, on affine connections and on Lie groups and Lie algebras that will be needed for our purpose. The fourth section is devoted to study the invariant affine connections on homogeneous spaces, following the presentation in 
\cite[Chapter IV]{GeometryI}. 

Nomizu's Theorem is presented and proved in Section \ref{se:Nomizu}, based on the previous work. The proof given by Nomizu in his seminal paper is quite different. Also, Lie-Yamaguti algebras will be introduced in Section \ref{se:Nomizu}, Exercise \ref{ex:LY}. Finally, in Section \ref{se:examples} left and bi-invariant affine connections in Lie groups will be studied. Several classes of algebras, like 
Lie-admissible, flexible, associative or left-symmetric, appear naturally.

Several exercises, complementing the theory, are scattered through the text. 

The material presented is well known (see \cite{GeometryI,KoNo}). It is expected that this presentation will be useful to the readers, and will whet their appetite to learn more about the subject.

\section{Manifolds}\label{se:manifolds}

\subsection{Manifolds} In these notes an $n$-dimensional \emph{manifold} will indicate a second countable Hausdorff topological space $M$ covered by a family of open sets $\{\cU_\lambda:\lambda\in\Lambda\}$, together with coordinate maps
\[
\begin{split}
\phi_\lambda:\cU_\lambda&\longrightarrow \RR^n\\
p&\mapsto \bigl(x_1(p),x_2(p),\ldots,x_n(p)\bigr),
\end{split}
\]
such that $\phi_\lambda$ is a homeomorphism of $\cU_\lambda$ into its range, an open set in $\RR^n$, satisfying that
\[
\phi_\lambda\circ\phi_\mu^{-1}: \phi_\mu(\cU_\lambda\cap\cU_\mu)\rightarrow
\phi_\lambda(\cU_\lambda\cap\cU_\mu)
\]
is a $C^\infty$-map (i.e., all the partial derivatives of any order exist and are continuous). The family of ``charts'' $\{(\cU_\lambda,\phi_\lambda):\lambda\in\Lambda\}$ is called an \emph{atlas} of the manifold.

The reader may consult \cite{Conlon}, \cite{Warner} or \cite{GeometryI} for the basic facts in this section.

\begin{examples}
\begin{itemize}
\item The euclidean space $\RR^n$ with the trivial atlas $\{(\RR^n,\id)\}$. (Or any open subset of $\RR^n$.)
\item The unit circle $\bS^1=\{(x,y)\in\RR^2:x^2+y^2=1\}$ with the atlas consisting of the following four charts:
\begin{align*}
\cU_{\textrm{top}}&=\{(x,y)\in\bS^1: y>0\}&\quad\phi_{\textrm{top}}:\cU_{\textrm{top}}&\rightarrow \RR,\ (x,y)\mapsto\arccos x,\\
\cU_{\textrm{bottom}}&=\{(x,y)\in\bS^1: y<0\}&\quad\phi_{\textrm{bottom}}:\cU_{\textrm{bottom}}&\rightarrow \RR,\ (x,y)\mapsto\arccos x,\\
\cU_{\textrm{right}}&=\{(x,y)\in\bS^1: x>0\}&\quad\phi_{\textrm{right}}:\cU_{\textrm{right}}&\rightarrow \RR,\ (x,y)\mapsto\arcsin y,\\
\cU_{\textrm{left}}&=\{(x,y)\in\bS^1: x<0\}&\quad\phi_{\textrm{left}}:\cU_{\textrm{left}}&\rightarrow \RR,\ (x,y)\mapsto\arcsin y.
\end{align*}
Here we assume $\arccos: (-1,1)\rightarrow (0,\pi)$ and $\arcsin:(-1,1)\rightarrow \left(-\frac{\pi}{2},\frac{\pi}{2}\right)$.

Note that, for example, $\cU_{\textrm{top}}\cap\cU_{\textrm{left}}=\{(x,y)\in\bS^1: y>0,x<0\}$ and the change of coordinates is the map:
\[
\begin{split}
\phi_{\textrm{top}}\circ\phi_{\textrm{left}}^{-1}:\left(0,\frac{\pi}{2}\right)&\rightarrow
\left(\frac{\pi}{2},\pi\right)\\
t\quad&\mapsto\ \pi-t,
\end{split}
\]
which is clearly $C^\infty$.

\item The cartesian product $M\times N$ of an $m$-dimensional manifold $M$ and an $n$-dimensional manifold $N$ is naturally an $(m+n)$-dimensional manifold.
\end{itemize}
\end{examples}

A continuous map $\Psi:M\rightarrow N$ between two manifolds $M$ and $N$ of respective dimensions $m$ and $n$ is \emph{smooth} if for any charts $(\cU_\lambda,\phi_\lambda)$ and $(\cV_\mu,\varphi_\mu)$ of $M$ and $N$ respectively, the map (between open sets in $\RR^m$ and $\RR^n$):
\[
\varphi_\mu\circ\Psi\circ\phi_\lambda^{-1}:\phi_\lambda\left(\cU_\lambda\cap\Psi^{-1}(\cV_\mu)\right)\rightarrow \varphi_\mu(\cV_\mu)
\]
is of class $C^\infty$.

In particular, we will consider the set $\cC^\infty(M)$ of smooth maps from the manifold $M$ into $\RR$. This is a unital, commutative, associative ring with the pointwise addition and multiplication of maps. Actually, it is an algebra over $\RR$, as we can multiply maps by constants.

\bigskip

\subsection{Vector fields} A \emph{nonassociative real algebra} is a real vector space $A$ endowed with a bilinear map (multiplication) $m:A\times A\rightarrow A$. We will usually (but not always!) write $xy$ for $m(x,y)$. The notions of homomorphism, subalgebra, ideal, ..., are the natural ones. 
Note that the word \emph{nonassociative} simply means `not necessarily associative'.

For example, a \emph{Lie algebra} is a nonassociative algebra $L$ with multiplication $(x,y)\mapsto [x,y]$ such that
\begin{itemize}
\item $[x,x]=0$ for any  $x\in L$ (the product is \emph{anticommutative}),
\item $[[x,y],z]+[[y,z],x]+[[z,x],y]=0$ for any $x,y,z\in L$ (\emph{Jacobi identity}).
\end{itemize}

Given a nonassociative algebra $A$ and a homomorphism $\varphi:A\rightarrow \RR$, a \emph{$\varphi$-derivation} is a linear map $d:A\rightarrow \RR$ such that $d(xy)=d(x)\varphi(y)+\varphi(x)d(y)$ for any $x,y\in A$. The set of $\varphi$-derivations forms a vector space. 

On the other hand, a \emph{derivation} of $A$ is a linear map $D:A\rightarrow A$ such that $D(xy)=D(x)y+xD(y)$ for any $x,y\in A$. 
The set of derivations $\Der(A)$ is a Lie algebra with the usual `bracket': $[D_1,D_2]=D_1\circ D_2-D_2\circ D_1$.

\begin{exercise} Check that indeed, if $D_1$ and $D_2$ are derivations of $A$, so is $[D_1,D_2]$.
\end{exercise}

\medskip

Given a manifold $M$ and a point $p\in M$, the map $\varphi_p:\cC^\infty(M)\rightarrow \RR$ given by evaluation at $p$: $\varphi_p(f)\bydef f(p)$, is a homomorphism. The \emph{tangent space} $T_pM$ at $p$ is the vector space of $\varphi_p$-derivations of $\cC^\infty(M)$\footnote{There are other possible definitions, but this is suitable for our purposes.}. That is, the elements of $T_pM$ (called \emph{tangent vectors} at $p$) are linear maps $v_p:\cC^\infty(M)\rightarrow \RR$, such that 
\[
v_p(fg)=v_p(f)g(p)+f(p)v_p(g)
\]
for any $f,g\in\cC^\infty(M)$.

Given a chart $(\cU,\phi)$ of $M$ with $p\in \cU$ and $\phi(q)=\bigl(x_1(q),\ldots,x_n(q)\bigr)$ for any $q\in\cU$,  we have the natural tangent vectors $\left.\frac{\partial\ }{\partial x_i}\right|_p$ given by
\[
f\mapsto \frac{\partial(f\circ\phi^{-1})}{\partial x_i}\bigl(\phi(p)\bigr).
\]
Actually, the elements $\left.\frac{\partial\ }{\partial x_i}\right|_p$, $i=1,\ldots,n$, form a basis of $T_pM$.

\medskip

The disjoint union 
\[
TM\bydef \bigcup_{p\in M}T_pM
\]
is called the \emph{tangent bundle} of $M$. It has a natural structure of manifold with atlas $\{(\tilde\cU_\lambda,\tilde\phi_\lambda):\lambda\in \Lambda\}$, where $\tilde\cU_\lambda=\bigcup_{p\in \cU_\lambda}T_pM$ and
\[
\begin{split}
\tilde\phi_\lambda:\tilde\cU_\lambda&\longrightarrow \RR^{2n}=\RR^n\times\RR^n\\
\sum_{i=1}^n\alpha_i\left.\frac{\partial\ }{\partial x_i}\right|_p\in T_pM
&\mapsto \bigl(\phi_\lambda(p),(\alpha_1,\ldots,\alpha_n)\bigr).
\end{split}
\]

A \emph{vector field} of a manifold $M$ is a smooth section of the natural projection $\pi:TM\rightarrow M$, $v_p\in T_pM\mapsto p$. That is, a vector field is a smooth map
\[
\begin{split}
X:M&\longrightarrow TM\\
p&\mapsto \ X_p,
\end{split}
\]
such that $X_p\in T_pM$ for any $p\in M$. Any vector field $X$ induces a derivation of the algebra $\cC^\infty(M)$ as follows: for any $f\in\cC^\infty(M)$, the image of $f$ under this derivation, also denoted by $X$, is the smooth map 
\[
Xf: p\mapsto X_p(f).
\] 
Any derivation of $\cC^\infty(M)$ is obtained in this way from a vector field. Therefore, we will identify the set of vector fields, denoted by $\cX(M)$, with the Lie algebra $\Der\bigl(\cC^\infty(M)\bigr)$.

\begin{exercise}\label{exe:RDer}
Let $R$ be a commutative associative algebra and let $\cL=\Der(R)$ be its Lie algebra of derivations (keep in mind the case $R=\cC^\infty(M)$!).
\begin{itemize}
\item Prove that $\cL$ is a module over $R$ by means of $(aD)(b)=aD(b)$ for any $a,b\in R$.
\item Prove that $[aD,bE]=ab[D,E]+\bigl(aD(b)\bigr)E-\bigl(bE(a)\bigr)D$ for any $a,b\in R$ and $D,E\in \cL$.
\end{itemize}
\end{exercise}

\bigskip

Let $\Phi:M\rightarrow N$ be a smooth map between two manifolds, and let $v_p\in T_pM$ be a tangent vector at a point $p\in M$. Then we can push forward $v_p$ to a tangent vector $\Phi_*(v_p)$ defined by $\Phi_*(v_p)(f)=v_p(f\circ\Phi)$ for any $f\in\cC^\infty(N)$. The `tangent map' $\Phi_*:TM\rightarrow TN$, $v_p\in T_pM\mapsto \Phi_*(v_p)\in T_{\Phi(p)}N$ is smooth.

In particular, if $\gamma:(-\epsilon,\epsilon)\rightarrow M$ is a smooth map with $\gamma(0)=p$ (a curve centered at $p$), then 
\[
\dot\gamma(0)\bydef \gamma_*\left(\left.\frac{d\ }{dt}\right|_{t=0}\right): f\mapsto 
\frac{d(f\circ\gamma)}{dt}(0)
\]
is a tangent vector at $p\in M$. Any tangent vector is obtained in this way.

\smallskip

Two vector fields $X\in\cX(M)$ and $Y\in\cX(N)$ are said to be \emph{$\Phi$-related} if $\Phi_*(X_p)=Y_{\Phi(p)}$ for any $p\in M$. In other words, $X$ and $Y$ are $\Phi$-related if
\[
(Yf)\circ\Phi=X(f\circ\Phi)
\]
for any $f\in\cC^\infty(N)$.

\begin{exercise}\label{exe:Phi_related}
Consider vector fields $X_1,X_2\in\cX(M)$ and $Y_1,Y_2\in\cX(N)$ such that $X_i$ and $Y_i$ are $\Phi$-related, for $i=1,2$. Prove that $[X_1,X_2]$ and $[Y_1,Y_2]$ are also $\Phi$-related.
\end{exercise}

If the smooth map $\Phi$ above is a diffeomorphism (i.e.; it is smooth, bijective, and the inverse is also smooth), then $\Phi$ induces an isomorphism of Lie algebras (denoted by $\Phi_*$ too):
\[
\begin{split}
\Phi_*:\cX(M)&\longrightarrow \cX(N)\\
X\ &\mapsto \Phi_*(X): f\in\cC^\infty(N)\mapsto X(f\circ\Phi)\circ\Phi^{-1}\in\cC^\infty(N).
\end{split}
\]
In this case, the vector fields $X\in\cX(M)$ and $Y\in\cX(M)$ are $\Phi$-related if and only if $Y=\Phi_*(X)$.

\bigskip

\subsection{Flows} 

Let $X\in\cX(M)$ be a vector field on a manifold $M$. For any $p\in M$ there exists a positive real number $\epsilon >0$, a neighborhood $U$ of $p$ in $M$, and a smooth map
\[
\begin{split}
\Phi:(-\epsilon,\epsilon)\times U&\longrightarrow M\\
(t,m)\ &\mapsto \Phi(t,m),
\end{split}
\]
such that for any $f\in\cC^\infty(M)$ and any $m\in U$ we have
$\Phi(0,m)=m$, and
\begin{equation}\label{eq:XmfPhi}
X_{\Phi(t,m)}f=\frac{d\ }{dt}f\bigl(\Phi(t,m)\bigr)\,\left(=\lim_{s\rightarrow 0}
\frac{f\bigl(\Phi(t+s,m)\bigr)-f\bigl(\Phi(t,m)\bigr)}{s}\right).
\end{equation}
The map $\Phi$ is a \emph{local flow} of $X$ at $p$. 

Local flows always exist by the fundamental existence and uniqueness theorem for first-order differential equations, which also implies that 
\begin{equation}\label{eq:Phi_t1t2}
\Phi(t_1+t_2,m)=\Phi(t_1,\Phi(t_2,m))
\end{equation}
when this makes sense.

The vector field $X$ is said to be \emph{complete} if there exists a \emph{global flow}
\[
\Phi:\RR\times M\rightarrow M,
\]
so that \eqref{eq:XmfPhi} holds for any $m\in M$ and $t\in\RR$. (Note that, conversely, any smooth map $\Phi:(-\epsilon,\epsilon)\times M\rightarrow M$ satisfying \eqref{eq:Phi_t1t2} determines a vector field by equation \eqref{eq:XmfPhi}.)

\smallskip

If $X$ is a complete vector field with global flow $\Phi$, for any $t\in \RR$ the map $\Phi_t:M\rightarrow M$ given by $\Phi_t(m)=\Phi(t,m)$ is a diffeomorphism and, for any vector field $Y\in\cX(M)$, the Lie bracket $[X,Y]$ can be computed as follows (see \cite[Theorem 4.3.2]{Conlon}):
\begin{equation}\label{eq:XYPhi}
[X,Y]=\left.\frac{d\ }{dt}\right|_{t=0}(\Phi_{-t})_*(Y)=
\lim_{t\rightarrow 0}\frac{(\Phi_{-t})_*(Y)-Y}{t}.
\end{equation}
Thus, the value at a point $p$: $[X,Y]_p$ is given by
\[
[X,Y]_p=\lim_{t\rightarrow 0}\frac{(\Phi_{-t})_*(Y_{\Phi_t(p)})-Y_p}{t}
\]
(a limit in the vector space $T_pM$). Actually, for $f\in\cC^\infty(M)$, consider the smooth map 
\[
\begin{split}
H:\RR\times\RR&\longrightarrow \RR\\
(t,s)\,&\mapsto H(t,s)=Y_{\Phi_t(p)}(f\circ\Phi_s).
\end{split}
\]
Then,
\[
\begin{split}
\frac{\partial H}{\partial t}(0,0)&=\left.\frac{d\ }{dt}\right|_{t=0}Y_{\Phi_t(p)}(f)=
   \left.\frac{d\ }{dt}\right|_{t=0}(Yf)(\Phi_t(p))=X_p(Yf),\\[6pt]
\frac{\partial H}{\partial s}(0,0)&=\left.\frac{d\ }{ds}\right|_{s=0}Y_{p}(f\circ\Phi_s)
   =Y_p\left(\left.\frac{d\ }{ds}\right|_{s=0}(f\circ\Phi_s)\right)\\
  &\qquad\qquad\textrm{(by the equality of mixed partial derivatives)}\\
  &=Y_p(Xf).
\end{split}
\]
Hence we obtain
\[
\left.\frac{d\ }{dt}\right|_{t=0}Y_{\Phi_t(p)}(f\circ\Phi_{-t})=
\left.\frac{d\ }{dt}\right|_{t=0}H(t,-t)=X_p(Yf)-Y_p(Xf)=[X,Y]_p(f),
\]
as indicated in \eqref{eq:XYPhi}.

Two vector fields $X,Y\in\cX(M)$ commute (i.e.; $[X,Y]=0$) if and only if the corresponding flows  commute: for any $m\in M$ there is a $\delta_m>0$ such that $\Phi_t\circ \Psi_s(m)=\Psi_s\circ\Phi_t(m)$  for $-\delta_m<t,s<\delta_m$ \cite[Theorem 2.8.20]{Conlon}

\bigskip

\section{Affine connections}\label{se:affine_connections}

Affine connections constitute the extension to ``non flat'' manifolds of the idea of directional derivative of vector fields. It provides the idea of ``parallel transport''.

\begin{definition} An \emph{affine connection} on a manifold $M$ is an $\RR$-bilinear map
\[
\begin{split}
\nabla:\cX(M)\times\cX(M)&\longrightarrow \cX(M)\\
(X,Y)\quad &\mapsto\ \nabla_XY,
\end{split}
\]
with the following properties:
\begin{enumerate}
\item $\nabla$ is $\cC^\infty(M)$-linear in the first component: 
\[
\nabla_{fX}Y=f\nabla_XY
\] 
for any $X,Y\in\cX(M)$ and any $f\in\cC^\infty(M)$. (Recall from Exercise \ref{exe:RDer} that $\cX(M)$ is a module for $\cC^\infty(M)$.)

\item  For any $X,Y\in\cX(M)$ and any $f\in\cC^\infty(M)$, the equation
\[
\nabla_X(fY)=(Xf)Y+f\nabla_XY
\]
holds.
\end{enumerate}
\end{definition}

The first property implies that for any $X,Y\in\cX(M)$ and any point $p\in M$, $\bigl(\nabla_XY\bigr)_p$ depends only on $X_p$ and $Y$, so we may define $\nabla_vY\in T_pM$ for any  $v\in T_pM$ and $Y\in\cX(M)$, which we should think as the derivative of the vector field $Y$ in the direction $v$ at the point $p$.

Also, the second property shows that $\bigl(\nabla_XY\bigr)_p$ depends on the values of $Y$ in a neighborhood of $p$. Thus affine connections can be restricted to open subsets of $M$, in particular to domains of charts. If $(U,\phi)$ is a chart of $M$ with $\phi(q)=\bigl(x_1(q),\ldots,x_n(q)\bigr)$, we obtain the $\cC^\infty(U)$-basis of the vector fields in $\cX(U)$: $\left\{\frac{\partial\ }{\partial x_1},\cdots,\frac{\partial\ }{\partial x_n}\right\}$. By properties (1) and (2), the restriction of the affine connection $\nabla$ on $M$ to $U$ is determined by the values
\[
\nabla_{\frac{\partial\ }{\partial x_i}}\frac{\partial\ }{\partial x_j}
=\sum_{k=1}^n\Gamma_{ij}^k\frac{\partial\ }{\partial x_k},
\]
for some maps $\Gamma_{ij}^k\in\cC^\infty(U)$. These maps $\Gamma_{ij}^k$ are called the \emph{Christoffel symbols} of $\nabla$ in the chart $(U,\phi)$.

Then, for $X,Y\in\cX(M)$, their restrictions to $U$ are of the form $X\vert_U=\sum_{i=1}^nf_i\frac{\partial\ }{\partial x_i}$ and $Y\vert_U=\sum_{i=1}^ng_i\frac{\partial\ }{\partial x_i}$ ($f_i,g_i\in\cC^\infty(U)$ for $i=1,\ldots,n$), and the vector field $\bigl(\nabla_XY\bigr)\vert_U$ is given by:
\begin{equation}\label{eq:nablaXY_U}
\bigl(\nabla_XY\bigr)\vert_U=\sum_{k=1}^n\left(\sum_{i=1}^nf_i\frac{\partial g_k}{\partial x_i} +\sum_{i,j=1}^nf_ig_j\Gamma_{ij}^k\right)\frac{\partial\ }{\partial x_k}.
\end{equation}

\begin{definition} Given an affine connection $\nabla$ on a manifold $M$, for any vector field $X\in\cX(M)$ the \emph{Nomizu operator}
\[
\bL_X:\cX(M)\rightarrow \cX(M)
\]
is the $\cC^\infty(M)$-linear form such that
\[
\bL_XY=\nabla_XY-[X,Y]
\]
for any $Y\in\cX(M)$.
\end{definition}

\begin{exercise}
Check that $\bL_X$ is indeed $\cC^\infty(M)$-linear.

More generally, let $\phi$ be a unital commutative and associative ring, $R$ a commutative associative $\phi$-algebra and $D=\Der(R)$ its Lie algebra of derivations (this is a module over $R$ by Exercise \ref{exe:RDer}). Define an \emph{affine connection} $\nabla$ on $(R,D)$ as a $\phi$-bilinear map
\[
\begin{split}
\nabla: D\times D&\longrightarrow D\\
(\delta_1,\delta_2)&\mapsto \nabla_{\delta_1}\delta_2,
\end{split}
\]
subject to:
\begin{itemize}
\item $\nabla_{r\delta_1}\delta_2=r\nabla_{\delta_1}\delta_2$,
\item $\nabla_{\delta_1}(r\delta_2)=\delta_1(r)\delta_2+r\nabla_{\delta_1}\delta_2$,
\end{itemize}
for any $r\in R$ and $\delta_1,\delta_2\in D$. Prove that for $\delta\in D$, the \emph{Nomizu operator} $\bL_{\delta}:D\rightarrow D$, $\delta'\mapsto \nabla_\delta\delta'-[\delta,\delta']$, is $R$-linear.
\end{exercise}

\smallskip

The $\cC^\infty(M)$-linearity of $\bL_X$ implies that for any $q\in M$, the tangent vector $\bigl(\bL_XY\bigr)_q$ depends only on $Y_q$, so we have a well defined endomorphism $(\bL_X)\vert_q:T_qM\rightarrow T_qM$. Also, $(\bL_X)\vert_q$ depends only on the values of $X$ in a neighborhood of $q$.

\medskip

In order to define what we understand by ``parallel transport'' we need some preliminaries.

\begin{definition} Let $a,b\in\RR$, $a<b$, and let $\gamma:[a,b]\rightarrow M$ be a smooth map into a manifold $M$ (this means that there is $\epsilon>0$ such that $\gamma$ extends to a smooth map $(a-\epsilon,b+\epsilon)\rightarrow M$). A \emph{vector field} along the curve $\gamma$ is a smooth map $\nu:[a,b]\rightarrow TM$ such that the diagram
\[
\begin{gathered}
\xymatrix{
&TM \ar[d]^\pi \\
[a,b] \ar[r]^\gamma \ar[ru]^\nu &M}
\end{gathered}
\]
commutes.
\end{definition}

We will denote by $\cX(\gamma)$ the vector space of vector fields along $\gamma$. 

For example, the derivative $\dot\gamma$ given by:
\[
\dot\gamma(t)\bydef \gamma_*\left(\frac{d\ }{dt}\right)\,\in T_{\gamma(t)}M
\]
is a vector field along $\gamma$. If $(U,\phi)$ is a chart such that the image of $\gamma$ is contained in $U$ and $\phi(\gamma(t))=(\gamma_1(t),\ldots,\gamma_n(t))$, then 
\[
\dot\gamma(t)=\sum_{i=1}^n\gamma_i'(t)\left(\frac{\partial\ }{\partial x_i}\right)_{\gamma(t)}.
\]

Given an affine connection $\nabla$ on the manifold $M$, a curve $\gamma:[a,b]\rightarrow M$ and a vector field $Y\in\cX(M)$, in local coordinates we can express $Y$ as
\[
Y=\sum_{i=1}^ng_i\frac{\partial\ }{\partial x_i},
\]
for smooth maps $g_i$. Then we can compute
\[
\begin{split}
\nabla_{\dot\gamma(t)}Y
 &=\sum_{i=1}^n\nabla_{\dot\gamma(t)}\left(g_i\frac{\partial\ }{\partial x_i}\right)\\
 &=\sum_{i=1}^n\left(\dot\gamma(t)(g_i)\left.\frac{\partial\ }{\partial x_i}\right|_{\gamma(t)} +
   g_i(\gamma(t))\sum_{k=1}^n\gamma_j'(t)\Gamma_{ji}^k(\gamma(t))\left.\frac{\partial\ }{\partial x_k}\right|_{\gamma(t)}\right)\\
  &=\sum_{k=1}^n\left(\frac{d g_k(\gamma(t))}{dt}
     +\sum_{i,j=1}^n\gamma_j'(t)g_i(\gamma(t))\Gamma_{ji}^k(\gamma(t))\right)
     \left.\frac{\partial\ }{\partial x_k}\right|_{\gamma(t)},
\end{split}
\]
and this depends only on the values of $Y$ in the points $\gamma(t)$. This allows us, given a vector field along $\gamma$: $\nu\in\cX(\gamma)$, which in local coordinates appears as $\nu(t)=\sum_{i=1}^n\nu_i(t)\left.\frac{\partial\ }{\partial x_i}\right|_{\gamma(t)}$, to define 
\[
\nabla_{\dot\gamma(t)}\nu\in T_{\gamma(t)}M,
\]
by means of
\[
\nabla_{\dot\gamma(t)}\nu=
\sum_{k=1}^n\left(\frac{d\nu_k(t)}{dt}
  +\sum_{i,j=1}^n\gamma_j'(t)\nu_i(t)\Gamma_{ji}^k(\gamma(t))\right)
  \left.\frac{\partial\ }{\partial x_k}\right|_{\gamma(t)} \,\in T_{\gamma(t)}M.
\]
In this way we obtain an operator
\[
\begin{split}
\cX(\gamma)&\longrightarrow \cX(\gamma)\\
\nu&\mapsto \Bigl(t\mapsto \nabla_{\dot\gamma(t)}\nu\Bigr).
\end{split}
\]

\begin{definition}
With $M$ and $\gamma$ as above, a vector field along $\gamma$: $\nu\in\cX(\gamma)$, is said to be \emph{parallel} if $\nabla_{\dot\gamma(t)}\nu=0$ for any $a\leq t\leq b$.
\end{definition}

The existence and uniqueness of solutions for ordinary differential equations prove that for any $a\leq c\leq b$, if $v_0\in T_{\gamma(c)}M$, then there is a unique parallel vector field $\nu$ along $\gamma$ such that $\nu_{\gamma(c)}=v_0$. This field $\nu$ is called the \emph{parallel transport of $v_0$ along $\gamma$}.

\begin{definition}
The curve $\gamma$ is said to be a \emph{geodesic} if $\dot\gamma$ is parallel along $\gamma$.
\end{definition}

\medskip

There are two important tensors attached to any affine connection on a manifold.

\begin{definition}
Let $\nabla$ be an affine connection on a manifold $M$.
\begin{itemize}
\item The \emph{torsion tensor} of $\nabla$ is the $\cC^\infty(M)$-bilinear map:
\[
\begin{split}
T:\cX(M)\times\cX(M)&\longrightarrow \cX(M)\\
 (X,Y)\quad &\mapsto\ T(X,Y)\bydef \nabla_XY-\nabla_YX-[X,Y].
\end{split}
\]
If the torsion tensor is identically $0$, then $\nabla$ is said to be \emph{symmetric} (or \emph{torsion-free}).

\smallskip

\item The \emph{curvature tensor} of $\nabla$ is the $\cC^\infty(M)$-trilinear map:
\[
\begin{split}
R:\cX(M)\times\cX(M)\times\cX(M)&\longrightarrow \cX(M)\\
 (X,Y,Z)\quad &\mapsto\  R(X,Y)Z\bydef \nabla_X\nabla_YZ-\nabla_Y\nabla_XZ-\nabla_{[X,Y]}Z.
\end{split}
\]
If the curvature tensor is identically $0$, then $\nabla$ is said to be \emph{flat}.
\end{itemize}
\end{definition}

\begin{exercise} Check that the torsion and curvature tensors are indeed $\cC^\infty(M)$-linear in each component. Define these concepts in the general setting of Exercise \ref{exe:RDer}.
\end{exercise}

The $\cC^\infty(M)$-linearity implies that the values of $T(X,Y)$ or $R(X,Y)Z$ at a given point $p\in M$ depend only on the values of the vector fields $X$, $Y$ and $Z$ at this point. Hence it makes sense to consider the torsion $T(u,v)$ or curvature $R(u,v)w$ for $u,v,w\in T_pM$.

\begin{exercise}
Prove that an affine connection $\nabla$ is symmetric if its Christoffel symbols in any chart satisfy $\Gamma_{ij}^k=\Gamma_{ji}^k$ for any $1\leq i,j,k\leq n$.
\end{exercise}

\begin{exercise} A \emph{pseudo-Riemannian} manifold is a manifold $M$ endowed with a nondegenerate symmetric $\cC^\infty(M)$-bilinear form (the \emph{pseudo-metric}) 
\[
g:\cX(M)\times\cX(M)\rightarrow \cC^\infty(M).
\]
Again the value $g(X,Y)$ at a point $p$ depends only on $X_p$ and $Y_p$, so $g$ restricts to an $\RR$-bilinear map $g_p:T_pM\times T_pM\rightarrow \RR$. The nondegeneracy of $g$ means that $g_p$ is nondegenerate for any $p\in M$. 

Prove that for any pseudo-Riemannian manifold $(M,g)$ there is a unique affine connection $\nabla$ such that:
\begin{romanenumerate}
\item $\nabla$ is symmetric,
\item $Xg(Y,Z)=g(\nabla_XY,Z)+g(Y,\nabla_XZ)$ for any $X,Y,Z\in\cX(M)$.
\end{romanenumerate}
This connection is called the \emph{Levi-Civita connection}. For this connection, the parallel transport defines isometries among the tangent spaces at different points.

\noindent{\small [Hint: Permute cyclically $X,Y,Z$ in condition (ii) and combine the resulting equations, using (i), to get that $g(\nabla_XY,Z)$ is uniquely determined. Now use that $g$ is nondegenerate.]}
\end{exercise}

\bigskip

\section{Lie groups and Lie algebras}\label{se:LGLA}

A \emph{Lie group} is a manifold $G$ which is also an abstract group such that the multiplication map
\[
\begin{split}
G\times G&\longrightarrow G\\
(g_1,g_2)&\mapsto g_1g_2,
\end{split}
\]
and the inversion map
\[
\begin{split}
G&\longrightarrow G\\
g& \mapsto g^{-1}
\end{split}
\]
are smooth maps.

\begin{exercise} Identify the unit circle $\bS^1$ with the set of norm one complex numbers. This set is a group under complex multiplication. Check that in this way $\bS^1$ becomes a Lie group.
\end{exercise}

\begin{remark} Given an element $g$ in a Lie group $G$, the left multiplication by $g$ gives a map $L_g:G\rightarrow G$ which is a diffeomorphism (with inverse $L_{g^{-1}}$). The same applies to the right multiplication $R_g$.
\end{remark}

The \emph{Lie algebra} of a Lie group $G$ is the real vector subspace of ``left-invariant vector fields'':
\[
\mathcal{Lie}(G)\bydef \{X\in\cX(G): (L_g)_*(X)=X\ \forall g\in G\}.
\]
By Exercise \ref{exe:Phi_related}, $\frg=\mathcal{Lie}(G)$ is a Lie subalgebra of $\cX(G)$. Moreover, the natural map
\[
\begin{split}
\frg&\longrightarrow T_eG\\
X&\mapsto \ X_e,
\end{split}
\]
where $e$ denotes the neutral element of the group structure, turns out to be a linear isomorphism (check this!), as any left-invariant vector field is determined uniquely by its value at any given point. Hence the dimension of $\frg$ as a vector space coincides with the dimension of $G$ as a manifold.

\begin{example} The general linear group $\GL_n(\RR)$ consists of the $n\times n$ regular matrices over $\RR$. This is the inverse image of the open set $\RR\setminus\{0\}$ under the smooth map (actually it is a polynomial map) given by the determinant $\det: \Mat_n(\RR)\rightarrow \RR$. Thus $\GL_n(\RR)$ is an open set in the euclidean space $\Mat_n(\RR)\simeq \RR^{n^2}$ and hence it is a manifold too. The multiplication is given by a polynomial map and the inversion by a rational map, and hence they are both smooth.

Since $\GL_n(\RR)$ is open in $\RR^{n^2}$ there is the global chart with standard coordinates $\{x_{ij}: 1\leq i,j\leq n\}$, where $x_{ij}(A)$ denotes the $(i,j)$-entry of the matrix $A$. Then the tangent space at any point $A\in G=\GL_n(\RR)$ can be identified with the vector space of $n\times n$-matrices $\Mat_n(\RR)$ by means of the map:
\begin{equation}\label{eq:TAG}
\begin{split}
T_A G&\longrightarrow \Mat_n(\RR)\\
v\ &\mapsto\ \Bigl(v(x_{ij})\Bigr)_{i,j=1}^n.
\end{split}
\end{equation}
(Note that $v=\sum_{i,j}v(x_{ij})\left.\frac{\partial\ }{\partial x_{ij}}\right|_{A}$.)

Let $\frg$ be the Lie algebra of $G=\GL_n(\RR)$, and let $X\in \frg$. For any $A=\bigl(a_{ij}\bigr)\in G$, the left-invariance of $X$ gives:
\[
\begin{split}
X_A(x_{ij})&=(L_A)_*(X_{I_n})(x_{ij})=X_{I_n}(x_{ij}\circ L_A)\\
  &=X_{I_n}\left(\sum_{k=1}^n a_{ik}x_{kj}\right)\quad\textrm{(the $(i,j)$-entry of $AB$ is $\sum_{k=1}^n a_{ik}b_{kj}$)}\\
  &=\sum_{k=1}^n a_{ik}X_{I_n}(x_{kj})=\sum_{k=1}^nX_{I_n}(x_{kj})x_{ik}(A).
\end{split}
\]
Therefore, we obtain
\begin{equation}\label{eq:X_left_invariant}
Xx_{ij}=\sum_{k=1}^nX_{I_n}\bigl(x_{kj}\bigr)x_{ik}.
\end{equation}
(Here $I_n$ denotes the identity matrix, which is the neutral element of $G$.)

Consider now two elements $X,Y\in\frg$. $X$ and $Y$ are determined by $X_{I_n}$ and $Y_{I_n}$ respectively, and these can be identified as in \eqref{eq:TAG} with the matrices $A\bydef \Bigl(X_{I_n}(x_{ij})\Bigr)$ and $B\bydef \Bigl(Y_{I_n}(x_{ij})\Bigr)$.
Then we get
\[
\begin{split}
[X,Y]_{I_n}(x_{ij})&=X_{I_n}(Yx_{ij})-Y_{I_n}(Xx_{ij})\\
  &=X_{I_n}\bigl(\sum_{k=1}^nb_{kj}x_{ik}\bigr)
    -Y_{I_n}\bigl(\sum_{k=1}^n a_{kj}x_{ik}\bigr)\qquad\textrm{(because of \eqref{eq:X_left_invariant})}\\
     &=\sum_{k=1}^nb_{kj}a_{ik}-\sum_{k=1}^na_{kj}b_{ik}=x_{ij}\bigl([A,B]\bigr).
\end{split}
\]
Denote by $\frgl_n(\RR)$ the Lie algebra defined on the vector space $\Mat_n(\RR)$ with the usual Lie bracket $[A,B]=AB-BA$. Then the above computation shows that the linear map
\[
\begin{split}
\frg&\longrightarrow \frgl_n(\RR)\\
X&\mapsto \Bigl(X_{I_n}(x_{ij})\Bigr)_{i,j=1}^n,
\end{split}
\]
is a Lie algebra isomorphism.

Moreover, for any $A\in\frgl_n(\RR)$ the map
\[
\begin{split}
\gamma_A:\RR&\longrightarrow G\\
 t&\mapsto \exp(tA)\bydef \sum_{n=0}^\infty \frac{t^nA^n}{n!},
\end{split}
\]
is a smooth group homomorphism. Besides, $\dot\gamma(0)=A$ (with the identification above $T_{I_n}G\simeq \frgl_n(\RR)$). We conclude that the smooth map
\[
\begin{split}
\Phi_A:\RR\times G&\longrightarrow G\\
(t,B)&\mapsto B\exp(tA)
\end{split}
\]
is the global flow of the left-invariant vector field $X$ with $X_{I_n}=A$.
\end{example}

We may substitute $\GL_n(\RR)$ by $\GL(V)$ (the group of linear automorphisms of $V$) for a finite-dimensional real vector space $V$, and $\frgl_n(\RR)$ by $\frgl(V)$ (the Lie algebra of linear endomorphisms of $V$ with the natural Lie bracket) in the Example above.
Once we fix a basis of $V$ we get  isomorphisms  $\GL(V)\cong\GL_n(\RR)$ (of Lie groups) and $\frgl(V)\cong\frgl_n(\RR)$ (of Lie algebras).

\medskip

In general, left-invariant vector fields on a Lie group $G$ are always complete and there is a smooth map from the Lie algebra $\frg$ to $G$, called the \emph{exponential}:
\begin{equation}\label{eq:exp}
\exp:\frg\rightarrow G,
\end{equation}
which restricts to a diffeomorphism of a neighborhood of $0\in\frg$ ($\frg\simeq\RR^n$) onto an open neighborhood of $e\in G$ such that for any $X\in\frg$, its global flow is given by the map
\[
\begin{split}
\Phi:\RR\times G&\longrightarrow G\\
(t,g)\ &\mapsto g\exp(tX).
\end{split}
\]
Any open neighborhood of $e\in G$ generates, as an abstract group, the connected component of $e$, which is a normal subgroup. In particular, the subgroup generated by $\exp(\frg)$ is the connected component $G_0$ of $e$.

\begin{remark}\label{re:PhitR} 
In this situation, for any $t\in\RR$, $\Phi_t$ is the diffeomorphism $R_{\exp(tX)}$ (right multiplication by $\exp(tX)$). Note that for any $h\in G$, the left invariance of any $X\in\frg$ gives $(L_h)_*(X)=X$, so for any $g,h\in G$,
\[
\Bigl((L_h)_*(X)\Bigr)_{hg}f=X_g(f\circ L_h)
=\left.\frac{d\ }{dt}\right|_{t=0}(f\circ L_h)(g\exp(tX)),
\]
and this certainly agrees with
\[
X_{hg}f=\left.\frac{d\ }{dt}\right|_{t=0}f(hg\exp(tX)).
\]
Also, condition \eqref{eq:Phi_t1t2} implies that the smooth map $\RR\rightarrow G$, $t\mapsto \exp(tX)$ is a group homomorphism (called a \emph{one-parameter subgroup}).
\end{remark}

We may consider too the real subspace of ``right-invariant'' vector fields:
\[
\frg_\textrm{right}\bydef \{X\in\cX(G): \bigl(R_g\bigr)_*(X)=X\ \forall g\in G\}.
\]
Again, the natural map $\frg_\textrm{right}\rightarrow T_eG$, $X\mapsto X_e$, is a linear isomorphism.

\begin{proposition}\label{pr:XXhat}
 Let $G$ be a Lie group with Lie algebra $\frg$. Consider the linear isomorphisms
\begin{align*}
\varphi_\textrm{left}:\frg&\longrightarrow T_eG\quad&
\quad\varphi_\textrm{right}: \frg_\textrm{right}&\longrightarrow T_eG\\
X&\mapsto\ X_e&\hat X\ &\mapsto\ \hat X_e.
\end{align*}
Then the map 
\[
\begin{split}
\frg&\longrightarrow\frg_\textrm{right}\\
X&\mapsto -\varphi_\textrm{right}^{-1}\circ\varphi_\textrm{left}(X)
\end{split}
\]
is an isomorphism of Lie algebras.

In other words, given $v\in T_eG$, let $X\in\frg$ (respectively $\hat X\in\frg_\textrm{right}$) be the left (respectively right) invariant vector field with $X_e=v$ (respectively $\hat X_e=v$). Then the map $X\mapsto -\hat X$ is a Lie algebra isomorphism.
\end{proposition}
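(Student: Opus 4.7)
\smallskip\noindent\textbf{Proof proposal.} The plan is to exhibit the isomorphism as the pushforward under the inversion map $\iota:G\to G$, $g\mapsto g^{-1}$, which is a diffeomorphism. The strategy has three ingredients: (a) $\iota_*$ sends left-invariant vector fields to right-invariant ones; (b) on $T_eG$, the differential $\iota_*\vert_e$ is multiplication by $-1$; (c) $\iota_*$, being the pushforward under a diffeomorphism, respects Lie brackets via Exercise \ref{exe:Phi_related}.

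First I would verify the intertwining relation $\iota\circ L_g=R_{g^{-1}}\circ\iota$ for every $g\in G$, which is immediate from $(gh)^{-1}=h^{-1}g^{-1}$. Then, for $X\in\frg=\mathcal{Lie}(G)$, I would set $Y\bydef \iota_*(X)\in\cX(G)$ and check right-invariance by the chain of equalities
\[
(R_g)_*(Y)=(R_g\circ\iota)_*(X)=(\iota\circ L_{g^{-1}})_*(X)=\iota_*\bigl((L_{g^{-1}})_*(X)\bigr)=\iota_*(X)=Y,
\]
valid because $X$ is left-invariant. Thus $Y\in\frg_{\textrm{right}}$.

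Next I would compute $Y_e=\iota_*\vert_e(X_e)$. Differentiating the smooth identity $\mu(g,\iota(g))=e$ (where $\mu$ is the multiplication) at $g=e$, applied to a tangent vector $v\in T_eG$, gives $v+\iota_*\vert_e(v)=0$, so $\iota_*\vert_e=-\id_{T_eG}$. Hence $Y_e=-X_e$, which by the defining property of $\varphi_{\textrm{right}}$ means that $Y=-\hat X$, where $\hat X\in\frg_{\textrm{right}}$ is the right-invariant vector field with $\hat X_e=X_e$. This shows that the stated map $X\mapsto -\hat X$ coincides with the restriction of $\iota_*$ to $\frg\subset\cX(G)$.

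Finally, since $\iota$ is a diffeomorphism, $\iota_*:\cX(G)\to\cX(G)$ is a Lie algebra isomorphism by Exercise \ref{exe:Phi_related}; equivalently, $X$ and $\iota_*(X)$ are $\iota$-related, and the exercise yields $\iota_*([X,X'])=[\iota_*(X),\iota_*(X')]$ for all $X,X'\in\frg$. Substituting $\iota_*(X)=-\hat X$ gives
\[
-\widehat{[X,X']}=[-\hat X,-\hat X']=[\hat X,\hat X'],
\]
so the map $\Psi:X\mapsto -\hat X$ satisfies $\Psi([X,X'])=-\widehat{[X,X']}=[\hat X,\hat X']=[\Psi(X),\Psi(X')]$, and it is a linear isomorphism because $\iota_*\vert_e=-\id$ and both $\varphi_{\textrm{left}}$, $\varphi_{\textrm{right}}$ are isomorphisms. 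The only delicate step is the computation $\iota_*\vert_e=-\id$; everything else is a formal consequence of the intertwining of $L_g$, $R_g$ by $\iota$ together with the naturality of Lie brackets under pushforward by diffeomorphisms.
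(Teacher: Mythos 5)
Your proof is correct, and it rests on the same central device as the paper's: the inversion map $\iota$ and the fact that $\iota_*$ is a Lie algebra isomorphism of $\cX(G)$ (via Exercise \ref{exe:Phi_related}). The difference is in how you establish $\iota_*(X)=-\hat X$. The paper first identifies the global flow of $\hat X$ as $(t,g)\mapsto\exp(tX)g$ and then computes $(\iota_*X)_{g^{-1}}f$ pointwise by differentiating $t\mapsto f\bigl((g\exp(tX))^{-1}\bigr)=f\bigl(\exp(-tX)g^{-1}\bigr)$, reading off $-\hat X_{g^{-1}}$ directly. You instead argue structurally: the identity $R_g\circ\iota=\iota\circ L_{g^{-1}}$ shows $\iota_*$ carries left-invariant fields to right-invariant ones, and since a right-invariant field is determined by its value at $e$, only the single computation $\iota_*\vert_e=-\id$ is needed. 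Your route is cleaner in that it isolates the one genuinely analytic step; the paper's route has the side benefit of exhibiting the flow of $\hat X$ explicitly, which is reused later (e.g., in Proposition \ref{pr:properties_X+}). The one point you gloss over is that differentiating $\mu(g,\iota(g))=e$ requires knowing $d\mu_{(e,e)}(u,v)=u+v$; this is standard (restrict $\mu$ to $G\times\{e\}$ and $\{e\}\times G$), but a sentence justifying it would make the step self-contained within the framework of these notes, where tangent vectors are defined as $\varphi_p$-derivations.
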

\begin{proof}
The smooth map 
\[
\begin{split}
\Phi^\textrm{right}:\RR\times G&\longrightarrow G\\
 (t,g)&\mapsto \exp(tX)g.
 \end{split}
\]
is the global flow of $\hat X$, because for any $f\in\cC^\infty(G)$:
\[
\begin{split}
\hat X_g(f)&=\Bigl((R_g)_*(\hat X_e)\Bigr)(f)=\hat X_e(f\circ R_g)\\
 &=v(f\circ R_g)=X_e(f\circ R_g)\\
 &=\left.\frac{d\ }{dt}\right|_{t=0}(f\circ R_g)(\exp(tX))
  = \left.\frac{d\ }{dt}\right|_{t=0}f(\exp(tX)g).
\end{split}
\]
Now consider the inversion map $\iota:G\rightarrow G$, $g\mapsto g^{-1}$. It is a diffeomorphism, so $\iota_*:\cX(G)\rightarrow \cX(G)$ is an isomorphism of Lie algebras.

For $g\in G$, $X$ and $\hat X$ as in the Proposition, and $f\in\cC^\infty(G)$ we get:
\[
\begin{split}
\bigl(\iota_*X)_{g^{-1}}f&=X_g(f\circ \iota)
   =\left.\frac{d\ }{dt}\right|_{t=0}(f\circ\iota)(g\exp(tX))\\
   &=\left.\frac{d\ }{dt}\right|_{t=0}f\bigl((g\exp(tX))^{-1}\bigr)\\
   &=\left.\frac{d\ }{dt}\right|_{t=0}f(\exp(-tX)g^{-1})=-\hat X_{g^{-1}}(f).
\end{split}
\]
We conclude that $\iota_*(X)=-\hat X$, so $X$ and $-\hat X$ are $\iota$-related. In particular, this shows that $\iota_*$ takes left invariant vector fields to right invariant vector fields, and hence it gives an isomorphism from $\frg$ to $\frg_\textrm{right}$.
\end{proof}

\medskip

For any element $g$ in a Lie group $G$, conjugation by $g$ gives a diffeomorphism:
\[
\begin{split}
\iota_g:G&\longrightarrow G\\
  x&\mapsto gxg^{-1},
\end{split}
\]
and this induces a Lie algebra isomorphism $\Ad_g\bydef (\iota_g)_*:\cX(G)\rightarrow \cX(G)$ that preserves $\frg$. Indeed, for $X\in\frg$ and $h\in G$
\[
\begin{split}
(L_h)_*(\Ad_g X)&=(L_h)_*\circ(\iota_g)_*(X)=
      (L_h)_*\circ(L_g)_*\circ(R_{g^{-1}})_*(X)\\
    &=(R_{g^{-1}})_*\circ (L_h)_*\circ (L_g)_*(X)\\
    &\qquad\textrm{(because $L_xR_y=R_yL_x$ for any $x,y\in G$)}\\
    &=(R_{g^{-1}})_*X\quad\textrm{(as $X$ is left invariant)}\\
    &=(R_{g^{-1}})_*\circ(L_g)_*(X)=\Ad_g X.
\end{split}
\]
We obtain in this way the so called \emph{adjoint representation}:
\[
\begin{split}
\Ad: G&\longrightarrow \GL(\frg)\\
  g&\mapsto \Ad_g\, ,
\end{split}
\]
which is a homomorphism of Lie groups (i.e., a smooth group homomorphism).

\medskip

Given a homomorphism of Lie groups $\varphi:G\rightarrow H$ and a left invariant vector field $X\in\frg$ (the Lie algebra of $G$), the map $t\mapsto \varphi(\exp(tX))$ gives a Lie group homomorphism $\RR\rightarrow H$ and hence the map
\[
\begin{split}
\Phi:\RR\times H&\longrightarrow H\\
(t,h)&\mapsto h\varphi(\exp(tX))
\end{split}
\]
is the global flow of a left invariant vector field $Y\in\frh$ (the Lie algebra of $H$), which is $\varphi$-related to $X$. This induces a Lie algebra homomorphism $\varphi_*:\frg\rightarrow\frh$ such that the diagram 
\begin{equation}\label{eq:varphi_exp}
\begin{gathered}
\xymatrix{
\frg \ar[r]^{\varphi_*} \ar[d]_\exp &\frh \ar[d]^\exp \\
G \ar[r]^\varphi  &H}
\end{gathered}
\end{equation}
is commutative.

In particular, we obtain a commutative diagram
\begin{equation}\label{eq:Ad_exp}
\begin{gathered}
\xymatrix{
\frg \ar[r]^{\Ad_*\ } \ar[d]_\exp &\frgl(\frg) \ar[d]^\exp \\
G \ar[r]^{\Ad\quad}  &\GL(\frg)}
\end{gathered}
\end{equation}
where, on the right $\exp$ denotes the standard exponential of linear endomorphisms.

For $X,Y\in\frg$ we get:
\[
\begin{split}
\Ad_{\exp(tX)}Y&=\bigl(\iota_{\exp(tX)}\bigr)_*Y\\
&=\bigl(R_{\exp(-tX)}\bigr)_*\circ\bigl(L_{\exp(tX)}\bigr)_*(Y)\\
&=\bigl(R_{\exp(-tX)}\bigr)_*(Y)\quad\textrm{(as $Y$ is left invariant).}
\end{split}
\]
Since $\Phi_t=R_{\exp(tX)}$ is the global flow of $X$ (Remark \ref{re:PhitR}), we obtain from \eqref{eq:XYPhi} the equation
\begin{equation}\label{eq:Rexp-tXY}
\left.\frac{d\ }{dt}\right|_{t=0}\bigl(R_{\exp(-tX)}\bigr)_*(Y)=[X,Y].
\end{equation}
Therefore the map $t\mapsto \Ad_{\exp(tX)}Y$ is the one-parameter group in $\frg\simeq T_eG$ whose derivative at $0$ is $[X,Y]$. Alternatively, $t\mapsto \Ad_{\exp(tX)}$ is the one-parameter group in the Lie group $\GL(\frg)$ whose derivative at $0$ is the linear endomorphism $\ad_X:Y\mapsto [X,Y]$ ($\ad_X\in\frgl(\frg)$, the Lie algebra of $\GL(\frg)$).

We conclude that
\begin{equation}\label{eq:AdtXad}
\Ad_{\exp(tX)}=\exp(t\ad_X)
\end{equation}
for any $X\in\frg$ and $t\in\RR$. Hence we obtain $(\Ad)_*=\ad:\frg\rightarrow\frgl(\frg)$, and the diagram in \eqref{eq:Ad_exp} becomes the following commutative diagram:
\begin{equation}\label{eq:Adad}
\begin{gathered}
\xymatrix{
\frg \ar[r]^{\ad\ } \ar[d]_\exp &\frgl(\frg) \ar[d]^\exp \\
G \ar[r]^{\Ad\quad}  &\GL(\frg)}
\end{gathered}
\end{equation}

\begin{remark}
On the left hand side of \eqref{eq:Adad} we have an arbitrary Lie group and its Lie algebra, while on the right hand side we have a very concrete Lie group: the general linear Lie group on a vector space, and its Lie algebra.
\end{remark}

\bigskip

\section{Invariant affine connections on homogeneous spaces}\label{se:Homog_spaces}

\subsection{Homogeneous spaces}

Let $G$ be a Lie group with Lie algebra $\frg$, $M$ a manifold and let
\[
\begin{split}
\tau:G\times M&\longrightarrow M\\
 (g,p)\ &\mapsto \tau(g,p)=g\cdot p
\end{split}\]
be a smooth action. That is, $\tau$ is both a smooth map and a group action.

If the group  action $\tau$ is transitive (i.e., for any $p,q\in M$ there is an element $g\in G$ such that $g\cdot p=q$), then $M$ is said to be a \emph{homogeneous space}.
(Formally, we should consider homogeneous spaces as triples $(M,G,\tau)$.)

\begin{example}
The $n$-dimensional sphere 
\[
\bS^n\bydef \{(x_1,\ldots,x_{n+1})\in\RR^{n+1}:\sum_{i=1}^{n+1}x_i^2=1\}
\] 
is a homogeneous space relative to the natural action of the special orthogonal group: $SO(n+1)\times\bS^N\rightarrow \bS^n$.

On the other hand, $\bS^{2n-1}$ can be identified with the set $\{(z_1,\ldots,z_n)\in\CC^n:\sum_{i=1}^n|z_i|^2=1\}$, and hence it is a homogeneous space relative to the natural action of the special unitary group: $SU(n)\times \bS^{2n-1}\rightarrow \bS^{2n-1}$.
\end{example}

Given a homogeneous space $M$, fix an element $p\in M$ and consider the corresponding \emph{isotropy subgroup}:
\[
H\bydef \{g\in G: g\cdot p=p\}.
\]
This is a closed subgroup of $G$ and hence (see e.g. \cite[Theorem 5.3.2]{Conlon}) it is a Lie group too, with Lie algebra
\[
\frh\bydef \{X\in\frg : \exp(tX)\in H\ \forall t\in\RR\}.
\]
(Recall the map $\exp:\frg\rightarrow G$ in equation \eqref{eq:exp}.) The Lie algebra $\frh$ is then a subalgebra of $\frg$.

Moreover, the set of left cosets $G/H$ is a manifold with a suitable atlas such that the bijection
\[
\begin{split}
G/H&\longrightarrow M\\
gH&\mapsto g\cdot p
\end{split}
\]
is a diffeomorphism (see e.g. \cite[Proposition 5.4.12]{Conlon}).

For any $X\in\frg$ we can define a global flow:
\[
\begin{split}
\Phi:\RR\times M&\longrightarrow M\\
(t,m)\ &\mapsto \exp(tX)\cdot m.
\end{split}
\]
The associated vector field in $\cX(M)$ will be denoted by $X^+$. Hence we have
\[
X_m^+(f)=\left.\frac{d\ }{dt}\right|_{t=0}f\bigl(\exp(tX)\cdot m\bigr)
\]
for any $m\in M$ and $f\in\cC^\infty(M)$. The next result summarizes some of the main properties of these vector fields.

\begin{proposition}\label{pr:properties_X+}
Let $M$ be a homogenous space of the Lie group $G$ as above. Then the following properties hold:
\begin{romanenumerate}
\item For any $q\in M$, let $\pi_q:G\rightarrow M$ be the smooth map $g\mapsto g\cdot q$. Then for any $X\in\frg$ and $f\in\cC^\infty(M)$
\[
X_q^+(f)=X_e(f\circ\pi_q).
\]
In other words, $X^+_q=(\pi_q)_*(X_e)$.

\item For any $X\in\frg$, let $\hat X$ be the right invariant vector field on $G$ with $X_e=\hat X_e$ (see Proposition \ref{pr:XXhat}). Then $\hat X$ and $X^+$ are $\pi_q$-related for any $q\in M$. Moreover, $X^+$ is determined by this property.

\item The map
\[
\begin{split}
\frg&\longrightarrow \cX(M)\\
X&\mapsto\ -X^+
\end{split}
\]
is a Lie algebra homomorphism.

\item For any $g\in G$, denote by $\tau(g)$ the diffeomorphism $M\rightarrow M$, $q\mapsto g\cdot q$. Then for any $g\in G$ and $X\in\frg$ we have
\[
\tau(g)_*(X^+)=\bigl(\Ad_gX)^+.
\]

\item Fix, as before, a point $p\in M$, then $\pi\bydef \pi_p:G\rightarrow M$ induces a linear surjection
\[
\begin{split}
\pi_*:\frg\,(\simeq T_e(G))&\longrightarrow T_pM\\
X\quad&\mapsto \left.\frac{d\ }{dt}\right|_{t=0}\pi(\exp(tX))=X^+_p,
\end{split}
\]
with $\ker\pi_*=\frh$ (the Lie algebra of the isotropy subgroup $H$ at $p$).
\end{romanenumerate}
\end{proposition}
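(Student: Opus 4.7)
\medskip

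\noindent\textbf{Proof proposal.} The overall strategy is to prove the five assertions in order, since (i) supplies the basic identity through which $X^+$ is computed, (ii) is the key geometric statement relating $X^+$ to right-invariant vector fields on $G$, and (iii)--(v) are then consequences obtained by combining (ii) with the machinery already developed in Sections~\ref{se:manifolds} and \ref{se:LGLA}.

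For (i), I would simply unwind the definitions: the flow of the left-invariant field $X$ is $(t,g)\mapsto g\exp(tX)$, so $X_e(h)=\frac{d}{dt}\bigr|_{t=0}h(\exp(tX))$ for $h\in\cC^\infty(G)$; applying this with $h=f\circ\pi_q$ gives $X_e(f\circ\pi_q)=\frac{d}{dt}\bigr|_{t=0}f(\exp(tX)\cdot q)$, which is by definition $X^+_q(f)$. For (ii), Proposition~\ref{pr:XXhat} and its proof show that the global flow of $\hat X$ is $(t,g)\mapsto\exp(tX)g$, so
\[
(\pi_q)_*(\hat X_g)(f)=\hat X_g(f\circ\pi_q)=\left.\frac{d\ }{dt}\right|_{t=0}f\bigl(\exp(tX)g\cdot q\bigr)=X^+_{g\cdot q}(f),
\]
which is exactly the $\pi_q$-relation $\pi_q(g)=g\cdot q$. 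Uniqueness of $X^+$ then follows because transitivity of $\tau$ makes $\pi_q$ surjective, so the value of any $Y\in\cX(M)$ that is $\pi_q$-related to $\hat X$ is forced at every point.

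For (iii), I would combine (ii) with Exercise~\ref{exe:Phi_related}: $\hat X$ is $\pi_q$-related to $X^+$ and $\hat Y$ is $\pi_q$-related to $Y^+$, hence $[\hat X,\hat Y]$ is $\pi_q$-related to $[X^+,Y^+]$. Proposition~\ref{pr:XXhat} gives $\widehat{[X,Y]}=-[\hat X,\hat Y]$ (the map $X\mapsto-\hat X$ being a Lie algebra homomorphism), and $\widehat{[X,Y]}$ is $\pi_q$-related to $[X,Y]^+$ by (ii). The uniqueness half of (ii) then forces $[X,Y]^+=-[X^+,Y^+]$, which is precisely the statement that $X\mapsto-X^+$ is a Lie algebra homomorphism. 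For (iv), I would compute directly:
\[
\bigl(\tau(g)_*(X^+)\bigr)_q(f)=X^+_{g^{-1}\cdot q}(f\circ\tau(g))=\left.\frac{d\ }{dt}\right|_{t=0}f\bigl(g\exp(tX)g^{-1}\cdot q\bigr),
\]
and then invoke the commutative diagram \eqref{eq:varphi_exp} applied to the Lie group homomorphism $\iota_g$, whose differential is $\Ad_g$ by definition, to rewrite $g\exp(tX)g^{-1}=\exp(t\Ad_gX)$; the right-hand side is then $(\Ad_gX)^+_q(f)$.

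Finally, for (v) I would factor $\pi=\pi_p$ through the diffeomorphism $G/H\xrightarrow{\sim} M$ mentioned just before the proposition. The identification $\pi_*(X)=X^+_p$ is a special case of (i), and the description of the kernel then reduces to the well-known statement that the quotient map $G\to G/H$ is a submersion at $e$ with kernel the Lie algebra of $H$, i.e.\ $\frh=\{X\in\frg : \exp(tX)\in H\text{ for all }t\}$. The only part I expect to require real care is the uniqueness clause of (ii), since it underlies the clean derivation of (iii); once that is nailed down, (iii)--(v) follow from the computations sketched above with no further subtlety.
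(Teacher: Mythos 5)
Your proposal is correct and follows essentially the same route as the paper: (i) by unwinding definitions, (ii) via the flow of $\hat X$ and right-invariance together with surjectivity of $\pi_q$, (iii) from Exercise~\ref{exe:Phi_related} combined with Proposition~\ref{pr:XXhat}, and (v) by the submersion/dimension-count argument. The only cosmetic difference is in (iv), where you invoke the naturality of $\exp$ under the homomorphism $\iota_g$ (diagram \eqref{eq:varphi_exp}) to write $g\exp(tX)g^{-1}=\exp(t\Ad_gX)$, whereas the paper instead uses the identity $\pi_{g\cdot q}\circ\iota_g=\tau(g)\circ\pi_q$ together with part (i); both are equally valid.
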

\begin{proof}
For (i), we proceed as follows:
\[
X_q^+(f)=\left.\frac{d\ }{dt}\right|_{t=0}f\bigl(\exp(tX)\cdot q\bigr)=
\left.\frac{d\ }{dt}\right|_{t=0}f\bigl(\pi_q(\exp(tX))\bigr)=X_e(f\circ\pi_q).
\]

For (ii), take any $f\in\cC^\infty(M)$ and $g\in G$ to obtain
\[
\begin{split}
\bigl(X^+f\bigr)\circ\pi_q(g)&=\bigl(X^+f\bigr)(g\cdot q)=X^+_{g\cdot q}(f)\\
&=X_e(f\circ\pi_{g\cdot q})=X_e(f\circ\pi_q\circ R_g)\\
&=\hat X_g(f\circ\pi_q)=\bigl(\hat X(f\circ\pi_q)\bigr)(g).
\end{split}\]
Hence $(X^+f)\circ\pi_q=\hat X(f\circ\pi_q)$, as required.

By Exercise \ref{exe:Phi_related}, for any $X,Y\in\frg$, the vector fields $[\hat X,\hat Y]$ in $G$ and $[X^+,Y^+]$ in $M$ are $\pi_q$-related for any $q\in M$. But $[\hat X,\hat Y]=-\widehat{[X,Y]}$ by Proposition \ref{pr:XXhat}, so $-\widehat{[X,Y]}$ and $[X^+,Y^+]$ are $\pi_q$-related for any $q\in M$, and this shows that $[X^+,Y^+]=-[X,Y]^+$, thus proving (iii).

Now, for any $q\in M$ and $f\in\cC^\infty(M)$ we obtain:
\[
\begin{split}
\bigl(\tau(g)_*(X^+)\bigr)_{g\cdot q}(f)
 &=X^+_q(f\circ\tau(g))=X_e(f\circ\tau(g)\circ\pi_q)\\
 &=X_e(f\circ\pi_{g\cdot q}\circ \iota_g)\\
 &\qquad\textrm{(as $\pi_{g\cdot q}\circ\iota_g(h)=\pi_{g\cdot q}(ghg^{-1})=(gh)\cdot q
   =\tau(g)\circ\pi_q(h)$)}\\
 &=\bigl(\Ad_g X\bigr)_e(f\circ\pi_{g\cdot q})=\bigl(\Ad_g X\bigr)^+_{g\cdot q}(f),
\end{split}
\]
and this proves (iv).

Finally, 
\[
\begin{split}
\ker\pi_*&=\left\{X\in\frg : \left.\frac{d\ }{dt}\right|_{t=0}\pi\bigl(\exp(tX)\bigr)=0\right\}\\
 &=\left\{X\in\frg : \left.\frac{d\ }{dt}\right|_{t=0}\exp(tX).p=0\right\}
\end{split}
\]
contains $\frh$. But $\pi_*$ is  surjective, and hence $\ker\pi_*=\frh$ by dimension count.
\end{proof}

Therefore, in the conditions of the Proposition above, the map 
\begin{equation}\label{eq:ghTpM}
\begin{split}
\phi:\frg/\frh&\longrightarrow T_pM\\
X+\frh&\mapsto\quad X^+_p
\end{split}
\end{equation}
is a linear isomorphism that allows us to identify $T_pM$ with $\frg/\frh$. Under this identification, for any $h\in H$, the map $\tau(h)_*:T_pM\rightarrow T_pM$ corresponds, because of Proposition \ref{pr:properties_X+}(iv), to the map
\begin{equation}\label{eq:jh}
\begin{split}
j(h):\frg/\frh&\longrightarrow \frg/\frh\\
X+\frh&\mapsto \bigl(\Ad_h X\bigr)+\frh.
\end{split}
\end{equation}
That is, we get a commutative diagram
\begin{equation}\label{eq:jhtauh}
\begin{gathered}
\xymatrix{
\frg/\frh \ar[r]^{j(h)} \ar[d]_\phi &\frg/\frh \ar[d]^\phi\\
T_pM \ar[r]_{\tau(h)_*} &T_pM}
\end{gathered}
\end{equation}

The map $j:H\rightarrow GL(\frg/\frh)$, $h\mapsto j(h)$, is the natural representation of $H$ on $\frg/\frh$ (induced by the adjoint representation).

\medskip

In what follows, when we refer to a homogeneous space $M\simeq G/H$, we mean that $G$ is a Lie group acting smoothly and transitively on $M$, and that a point $p\in M$ has been fixed with $H$ as isotropy subgroup.

\begin{remark}\label{re:X+_span}
Proposition \ref{pr:properties_X+}(v) shows that for any $q\in M$, the subspace $\{X^+_q:X\in\frg\}$ fills the whole $T_qM$. In other words, the space $\{X^+: X\in\frg\}$ spans $\cX(M)$ as a $\cC^\infty(M)$-module.
\end{remark}

\medskip

\subsection{Invariant affine connections on homogeneous spaces}

\begin{definition}
Let $M\simeq G/H$ be a homogeneous space. An affine connection $\nabla$ on $M$ is said to be \emph{invariant} if for any $X,Y\in\cX(M)$ and $g\in G$,
\[
\tau(g)_*\Bigl(\nabla_XY\Bigr)=\nabla_{\tau(g)_*(X)}\tau(g)_*(Y).
\]
\end{definition}

\begin{proposition}\label{pr:Ltau}
Let $\nabla$ be an invariant affine connection on a homogeneous space $M\simeq G/H$. Then for any  $X\in\frg$ (the Lie algebra of $G$) and any $g\in G$ we have:
\[
\bL_{(\Ad_g X)^+}=\tau(g)_*\circ \bL_{X^+}\circ\tau(g^{-1})_*.
\]
\end{proposition}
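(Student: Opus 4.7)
The plan is to apply both sides of the identity to an arbitrary vector field $Y\in\cX(M)$ and unwind the definition of the Nomizu operator, then invoke two facts: the invariance of $\nabla$ and the fact that $\tau(g)_*$ is a Lie algebra automorphism of $\cX(M)$.

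First I would fix $Y\in\cX(M)$ and compute
\[
\bigl(\tau(g)_*\circ L_{X^+}\circ\tau(g^{-1})_*\bigr)(Y)
=\tau(g)_*\Bigl(\nabla_{X^+}\bigl(\tau(g^{-1})_*Y\bigr)-\bigl[X^+,\tau(g^{-1})_*Y\bigr]\Bigr).
\]
Then I would handle the two summands separately. For the first, the invariance of $\nabla$ gives
\[
\tau(g)_*\Bigl(\nabla_{X^+}\bigl(\tau(g^{-1})_*Y\bigr)\Bigr)
=\nabla_{\tau(g)_*(X^+)}\,\tau(g)_*\bigl(\tau(g^{-1})_*Y\bigr)
=\nabla_{\tau(g)_*(X^+)}Y,
\]
using that $\tau(g)_*\circ\tau(g^{-1})_*=\id$. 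For the second, because $\tau(g)$ is a diffeomorphism its push-forward is a Lie algebra automorphism of $\cX(M)$, so
\[
\tau(g)_*\bigl[X^+,\tau(g^{-1})_*Y\bigr]
=\bigl[\tau(g)_*(X^+),Y\bigr].
\]

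Finally I would invoke Proposition \ref{pr:properties_X+}(iv), which gives $\tau(g)_*(X^+)=(\Ad_g X)^+$, to rewrite both resulting terms. Assembling them yields
\[
\bigl(\tau(g)_*\circ L_{X^+}\circ\tau(g^{-1})_*\bigr)(Y)
=\nabla_{(\Ad_g X)^+}Y-\bigl[(\Ad_g X)^+,Y\bigr]
=L_{(\Ad_g X)^+}Y,
\]
and since $Y$ was arbitrary, the claimed identity of operators on $\cX(M)$ follows.

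There is really no hard step here: the statement is essentially a tautological consequence of the invariance axiom together with the functoriality of bracket under push-forward. The only point demanding a little care is remembering that $\tau(g)_*$ really does commute with the Lie bracket (Exercise \ref{exe:Phi_related} in the diffeomorphism case), and that the transformation rule $\tau(g)_*(X^+)=(\Ad_g X)^+$ established in Proposition \ref{pr:properties_X+}(iv) is what converts the push-forward of a fundamental vector field into the fundamental vector field of the adjoint-transformed element. Once these two facts are in hand, the computation is one line.
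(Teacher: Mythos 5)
Your proof is correct and follows essentially the same route as the paper: the paper reduces the claim to the identity $L_{\tau(g)_*U}\circ\tau(g)_*=\tau(g)_*\circ L_U$ via Proposition \ref{pr:properties_X+}(iv) and calls the rest ``a straightforward consequence of the invariance of $\nabla$,'' which is exactly the two-term computation (invariance for the $\nabla$ term, bracket-preservation of $\tau(g)_*$ for the commutator term) that you carry out explicitly.
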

\begin{proof}
For any $X\in\frg$, recall from Proposition \ref{pr:properties_X+}(iv) that $(\Ad_g X)^+=\tau(g)_*(X^+)$, so it is enough to prove that $\bL_{\tau(g)_*U}\circ\tau(g)_*=\tau(g)_*\circ \bL_U$ for any $U\in\cX(M)$, and this is a straightforward consequence of the invariance of $\nabla$.
\end{proof}

Therefore, the invariant affine connection $\nabla$, which is determined by the Nomizu operators $\bL_{X^+}$ for $X\in\frg$ because of Remark \ref{re:X+_span}, is actually determined by the endomorphisms $\left.\bL_{X^+}\right|_p$, because for any $q\in M$, there is an element $g\in G$ such that $q=g\cdot p$, and for $Y\in\frg$ we have
\[
\begin{split}
\bL_{X^+}(Y^+_{g\cdot p})&=
  \bL_{X^+}\Bigl(\tau(g)_*(\Ad_{g^{-1}}Y)^+_p\Bigr)\quad\textrm{(Proposition \ref{pr:properties_X+}(iv))}\\
  &=\tau(g)_*\bL_{(\Ad_{g^{-1}} X)^+}\Bigl((\Ad_{g^{-1}} Y)^+_p\Bigr).
\end{split}
\]
If we identify $T_pM$ with $\frg/\frh$ via the map $\phi$ in \eqref{eq:ghTpM} we may think of $\left.\bL_{X^+}\right|_p$ as a linear endomorphism of $\frg/\frh$.

Hence $\nabla$ is determined by the linear map, also denoted by $\bL$:
\begin{equation}\label{eq:L}
\begin{split}
\bL:\frg&\longrightarrow \End_\RR(\frg/\frh)\\
X&\mapsto \begin{aligned}[t]
\bL_X:\frg/\frh&\longrightarrow \frg/\frh\\
Y+\frh&\mapsto \phi^{-1}\bigl(\bL_{X^+}(Y^+_p)\bigr).\end{aligned}
\end{split}
\end{equation}

\begin{proposition}\label{pr:properties_ab}
Let $\nabla$ be an invariant affine connection on the homogeneous space $M\simeq G/H$. Then the linear map $\bL$ just defined satisfies the following properties:
\begin{enumerate}
\item[(a)] $\bL_{\Ad_hX}=j(h)\circ \bL_X\circ j(h^{-1})$ for any $X\in\frg$ and $h\in H$.
\item[(b)] $\bL_X(Y+\frh)=[X,Y]+\frh$ for any $X\in\frh$ and $Y\in\frg$.
\end{enumerate}
\end{proposition}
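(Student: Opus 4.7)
The plan is to derive both parts by combining three ingredients assembled in the preceding material: the intertwining identity in Proposition \ref{pr:Ltau}, the identification \eqref{eq:jhtauh} between $\tau(h)_*$ and $j(h)$ under $\phi$, and the $\cC^\infty(M)$-linearity of $\nabla$ in the first slot.

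\medskip

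For part~(a), fix $h\in H$ and $X,Y\in\frg$. Since $\tau(h)$ fixes the base point $p$, the tangent map $\tau(h)_*$ is an automorphism of $T_pM$, and the commutative square \eqref{eq:jhtauh} tells us that under the identification $\phi:\frg/\frh\xrightarrow{\sim}T_pM$, the endomorphism $\tau(h)_*$ corresponds exactly to $j(h)$ (and similarly $\tau(h^{-1})_*$ to $j(h^{-1})$). I would start from the definition \eqref{eq:L}, namely $L_{\Ad_hX}(Y+\frh)=\phi^{-1}\bigl(L_{(\Ad_hX)^+}(Y^+_p)\bigr)$, apply Proposition~\ref{pr:Ltau} with $g=h$ to rewrite the inner expression as $\tau(h)_*\,L_{X^+}\,\tau(h^{-1})_*(Y^+_p)$, and then transport each of the three factors through $\phi$. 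The outer $\tau(h)_*$ becomes $j(h)$, the inner $\tau(h^{-1})_*$ becomes $j(h^{-1})$, and the middle $L_{X^+}$ at the point $p$ becomes $L_X$ by definition. This gives part~(a).

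\medskip

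For part~(b), suppose $X\in\frh$. Since $\phi^{-1}:T_pM\to \frg/\frh$ is a linear isomorphism with kernel exactly $\frh$ under the inverse identification, the value $X^+_p$ equals $\phi(X+\frh)=0$. (Equivalently, $\exp(tX)\in H$ for all $t$, so $\exp(tX)\cdot p=p$, whence $\frac{d}{dt}\big|_{t=0}\exp(tX)\cdot p = 0$.) Now unwind the definition:
\[
L_X(Y+\frh)=\phi^{-1}\bigl(L_{X^+}(Y^+_p)\bigr)=\phi^{-1}\Bigl(\bigl(\nabla_{X^+}Y^+\bigr)_p-\bigl[X^+,Y^+\bigr]_p\Bigr).
\]
Because $\nabla$ is $\cC^\infty(M)$-linear in the first argument, $(\nabla_{X^+}Y^+)_p$ depends only on $X^+_p=0$, so this term vanishes. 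For the bracket term, Proposition~\ref{pr:properties_X+}(iii) gives $[X^+,Y^+]=-[X,Y]^+$, hence $-[X^+,Y^+]_p = [X,Y]^+_p = \phi([X,Y]+\frh)$. Applying $\phi^{-1}$ yields part~(b).

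\medskip

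There is no real obstacle here; the only point one has to be slightly careful about is consistently applying the identification \eqref{eq:jhtauh} on \emph{both} sides (so that $\tau(h^{-1})_*$ moves to the \emph{right} of $L_X$, matching the formula $j(h)\circ L_X\circ j(h^{-1})$), and remembering in (b) that the vanishing of $X^+_p$ kills the $\nabla$-term by $\cC^\infty(M)$-linearity rather than by any property of the connection beyond that. Everything else is bookkeeping through the definition \eqref{eq:L} of $L$.
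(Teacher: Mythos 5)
Your proposal is correct and follows essentially the same route as the paper: part (a) is exactly the combination of Proposition \ref{pr:Ltau} with the commutative square \eqref{eq:jhtauh}, and part (b) is the paper's argument that $X^+_p=0$ kills the $\nabla$-term by $\cC^\infty(M)$-linearity in the first slot, leaving $-[X^+,Y^+]_p=[X,Y]^+_p$ via Proposition \ref{pr:properties_X+}(iii). You merely spell out the bookkeeping through $\phi$ that the paper leaves implicit.
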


In other words, $\bL:\frg\rightarrow \End_{\RR}(\frg/\frh)$ is a homomorphism of $H$-modules that extends the natural (adjoint) representation of $\frh$ on $\frg/\frh$.

Actually, (a) is a direct consequence of Proposition \ref{pr:Ltau} and the commutativity of \eqref{eq:jhtauh}, and (b) follows because for $X\in\frh$, $X^+_p=0$, so $\bigl(\nabla_{X^+}Y^+\bigr)_p=0$, and hence $\bigl(\bL_{X^+}(Y^+)\bigr)_p=\bigl(\nabla_{X^+}Y^+\bigr)_p-[X^+,Y^+]_p=[X,Y]^+_p$.

Conversely, given an $\RR$-linear map $\bL:\frg\rightarrow \End_\RR(\frg/\frh)$ satisfying the properties (a) and (b) above, we may define, for $X,Y\in\frg$ and $g\in G$:
\begin{itemize}
\item $\nabla_{X^+_p}Y^+\bydef\phi\bigl(\bL_X(Y+\frh)\bigr)-[X,Y]^+_p\in T_pM$ (well defined by property (b)),
\item $\nabla_{X^+_{g\cdot p}}Y^+\bydef\tau(g)_*\left(\nabla_{(\Ad_{g^{-1}} X)_p^+}(\Ad_{g^{-1}}Y)^+\right)$ (well defined by property (a)),
\end{itemize}
and this defines an invariant affine connection on $M$. 

We summarize the above arguments in the next result, which goes back to Vinberg \cite[Theorem 2]{Vinberg}.

\begin{theorem}\label{th:nablaL}
The invariant affine connections on a homogeneous space $M\simeq G/H$ are in bijection with the $\RR$-linear maps $\bL:\frg\rightarrow\End_\RR(\frg/\frh)$ satisfying the properties \textrm{(a)} and \textrm{(b)} in Proposition \ref{pr:properties_ab}.
\end{theorem}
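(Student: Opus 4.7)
The plan is to exhibit the bijection by explicit construction. For the forward direction, given an invariant $\nabla$, the map $L$ defined by \eqref{eq:L} was already shown in Proposition \ref{pr:properties_ab} to satisfy (a) and (b); specifically, (a) comes from specializing Proposition \ref{pr:Ltau} to $g=h\in H$ and then transferring the resulting identity on $\End(T_pM)$ to $\End_\RR(\frg/\frh)$ via the commutative diagram \eqref{eq:jhtauh}, while (b) is a consequence of $X^+_p=0$ for $X\in\frh$ combined with Proposition \ref{pr:properties_X+}(iii), which together give $(L_{X^+}Y^+)_p=-[X^+,Y^+]_p=[X,Y]^+_p$.

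For the inverse direction, given $L$ satisfying (a) and (b), define pointwise
\[
(\nabla_{X^+}Y^+)_p\bydef\phi\bigl(L_X(Y+\frh)\bigr)-[X,Y]^+_p,
\]
\[
(\nabla_{X^+}Y^+)_{g\cdot p}\bydef\tau(g)_*\bigl((\nabla_{(\Ad_{g^{-1}}X)^+}(\Ad_{g^{-1}}Y)^+)_p\bigr).
\]
Well-definedness of the first formula in $X^+_p$ (i.e., modulo $\frh$) is exactly property (b): replacing $X$ by $X+Z$ with $Z\in\frh$ adds $\phi(L_Z(Y+\frh))-[Z,Y]^+_p=\phi([Z,Y]+\frh)-[Z,Y]^+_p=0$. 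Independence of the second formula from the choice of $g$ (ambiguous by right multiplication by $h\in H$) uses property (a) together with diagram \eqref{eq:jhtauh}: applying $j(h^{-1})$ on the $\frg/\frh$ side corresponds under $\phi$ to $\tau(h^{-1})_*$, and the $\tau(h)_*$ factors cancel appropriately.

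The main obstacle is promoting this pointwise prescription to a genuine affine connection on $\cX(M)$. Smoothness of $\nabla_{X^+}Y^+$ as a vector field on $M$ follows from the smoothness of $\tau$ and the propagation formula. To extend to arbitrary vector fields, one works locally: around any $q\in M$ pick $X_1,\ldots,X_n\in\frg$ with $\{X_i^+\}$ a local frame (possible by Remark \ref{re:X+_span}), read off Christoffel symbols $\Gamma_{ij}^k\in\cC^\infty(U)$ from $\nabla_{X_i^+}X_j^+=\sum_k\Gamma_{ij}^kX_k^+$, and use \eqref{eq:nablaXY_U} to define $\nabla_UV$; independence of the chosen frame holds because $\nabla_{X^+}Y^+$ was prescribed coherently on every pair $(X,Y)\in\frg\times\frg$ before localization. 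The affine-connection axioms and the $G$-invariance are then immediate. Finally, the two constructions are mutual inverses: applying \eqref{eq:L} to the constructed $\nabla$ returns $\phi^{-1}\bigl((\nabla_{X^+}Y^+)_p+[X,Y]^+_p\bigr)=\phi^{-1}\bigl(\phi(L_X(Y+\frh))\bigr)=L_X(Y+\frh)$, recovering $L$; conversely an invariant $\nabla$ is determined by its Nomizu operators at $p$ together with $G$-invariance (the discussion immediately preceding the theorem), so the connection built from its associated $L$ coincides with $\nabla$.
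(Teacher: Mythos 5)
Your proposal follows the same route as the paper: the forward direction is exactly Proposition \ref{pr:properties_ab} (with (a) from Proposition \ref{pr:Ltau} and diagram \eqref{eq:jhtauh}, and (b) from $X^+_p=0$ together with Proposition \ref{pr:properties_X+}(iii)), and the inverse direction uses the same two displayed formulas, with well-definedness resting on properties (b) and (a) respectively, just as in the discussion preceding the theorem. The only point where you go beyond the paper is the extension of the prescription from the fields $X^+$ to all of $\cX(M)$, which the paper simply asserts; your local-frame sketch is a reasonable way to organize that step, though the frame-independence (equivalently, the Leibniz compatibility of the prescription with $\cC^\infty(M)$-linear relations among the $Y^+$) is stated rather than actually verified, which matches the paper's own level of detail.
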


\begin{remark} Condition (a) above means that the map $\bL$ in the Theorem is a homomorphisms of $H$-modules, and this implies that it is a homomorphism of $\frh$-modules:
\[
\bL_{[U,X]}=[\tilde\jmath(U),\bL_X]
\]
for any $U\in\frh$ and $X\in\frg$, where $\tilde\jmath(U)$ is the endomorphism of $\frg/\frh$ such that $\tilde\jmath(U)(Y+\frh)=[U,Y]+\frh$ for any $Y\in\frg$. Indeed, property (a) gives
\[
\bL_{\Ad_{\exp(tU)}X}\circ j(\exp(tU))=j(\exp(tU))\circ \bL_X
\]
in $\End_\RR(\frg/\frh)$ for any $t\in\RR$. Taking the derivative at $t=0$ we get $\bL_{[U,X]}+\bL_X\circ\tilde\jmath(U)=\tilde\jmath(U)\circ \bL_X$, as required.
\end{remark}

\medskip

The torsion and curvature tensors of an invariant affine connection on a homogeneous space $M\simeq G/H$ can now be expressed in terms of the linear map $\bL$ in \eqref{eq:L}, because, by invariance, it is enough to compute them at the point $p$.

\begin{description}
\item[Torsion] For $X,Y\in\frg$ we have
\[
\begin{split}
T(X^+,Y^+)&\bydef \nabla_{X^+}Y^+-\nabla_{Y^+}X^+-[X^+,Y^+]\\
 &=\bL_{X^+}(Y^+)-\bL_{Y^+}(X^+)+[X^+,Y^+]\\
 &=\bL_{X^+}(Y^+)-\bL_{Y^+}(X^+)-[X,Y]^+\quad\textrm{(by Proposition \ref{pr:properties_X+}(iii)),}
\end{split}
\]
so at the point $p$ we get
\[
T(X^+,Y^+)_p=\phi\left(\bL_X(Y+\frh)-\bL_Y(X+\frh)-\bigl([X,Y]+\frh\bigr)\right),
\]
which corresponds, through the map $\phi$ in \eqref{eq:ghTpM} to
\begin{equation}\label{eq:TXhYh}
T(X+\frh,Y+\frh)\bydef \bL_X(Y+\frh)-\bL_Y(X+\frh)-\bigl([X,Y]+\frh\bigr).
\end{equation}

\item[Curvature] Again, for $X,Y\in\frg$ we have
\[
\begin{split}
R(X^+,Y^+)&\bydef [\nabla_{X^+},\nabla_{Y^+}]-\nabla_{[X^+,Y^+]}\\
 &=[\bL_{X^+}+\ad_{X^+},\bL_{Y^+}+\ad_{Y^+}]-\nabla_{[X^+,Y^+]}\\
 &=[\bL_{X^+},\bL_{Y^+}]+[\ad_{X^+},\bL_{Y^+}]+[\bL_{X^+},\ad_{Y^+}]+\ad_{[X^+,Y^+]}-\nabla_{[X^+,Y^+]}\\
 &=[\bL_{X^+},\bL_{Y^+}]+[\ad_{X^+},\bL_{Y^+}]+[\bL_{X^+},\ad_{Y^+}]-\bL_{[X^+,Y^+]}.
\end{split}
\]
\end{description}

Now, given a manifold $M$ with an affine connection, an \emph{affine transformation} is a diffeomorphism $\varphi$ such that $\varphi_*(\nabla_XY)=\nabla_{\varphi_*(X)}\varphi_*(Y)$ for any vector fiels $X,Y\in\cX(M)$, and a vector field $X\in\cX(M)$ is called an \emph{infinitesimal affine transformation} if its local flow $\Phi_t$ is an affine transformation for $-\epsilon<t<\epsilon$ for some $\epsilon>0$. 

If $X\in\cX(M)$ is an infinitesimal affine transformation, for any $Y,Z\in\cX(M)$ we get:
\begin{equation}\label{eq:infinitesima_affine}
\begin{split}
[X,\nabla_YZ]&=\left.\frac{d\ }{dt}\right|_{t=0}(\Phi_{-t})_*(\nabla_YZ)\ \textrm{(because of \eqref{eq:XYPhi})}\\
 &=\left.\frac{d\ }{dt}\right|_{t=0}\left(\nabla_{(\Phi_{-t})_*(Y)}(\Phi_{-t})_*(Z)\right)\\
 &=\nabla_{[X,Y]}Z+\nabla_Y[X,Z].
\end{split}
\end{equation}
The last equality above is proved as follows. Locally (i.e., in local coordinates) $Y(t,q)\bydef(\Phi_{-t})_*Y=\sum_i f_i(t,q)\frac{\partial\ }{\partial x_i}$ and $Z(t,q)\bydef(\Phi_{-t})_*Z=\sum_i g_i(t,q)\frac{\partial\ }{\partial x_i}$. Hence
\[
\begin{split}
\nabla_{Y(t,q)}Z(t,q)
=\sum_{i,j}f_i(t,q)\frac{\partial g_j(t,q)}{\partial x_i}\frac{\partial\ }{\partial x_j}
\,+\,\sum_{i,j}f_i(t,q)g_j(t,q)\Gamma_{i,j}(q)\frac{\partial\ }{\partial x_k}.
\end{split}
\]
(Note that the Christoffel symbols do not depend on $t$.) Now take the derivative with respect to $t$ at $t=0$ using Leibniz's rule to get the required equality.

Returning to our homogeneous space $M\simeq G/H$ with an invariant affine connection $\nabla$, for any $X\in\frg$, $X^+$ is an infinitesimal affine transformation, because its global flow is $\tau(\exp(tX))$ and $\nabla$ is invariant. Hence, by \eqref{eq:infinitesima_affine}, we get for any $X,Y\in\frg$:
\[
\begin{split}
[\ad_{X^+},\bL_{Y^+}]&=[\ad_{X^+},\nabla_{Y^+}+\ad_{Y^+}]
 =\nabla_{[X^+,Y^+]}+\ad_{[X^+,Y^+]}\\
 &=\bL_{[X^+,Y^+]}=-\bL_{[X,Y]^+},
\end{split}
\]
so the formula above for $R(X^+,Y^+)$ becomes:
\[
\begin{split}
R(X^+,Y^+)&=[\bL_{X^+},\bL_{Y^+}]+\bL_{[X^+,Y^+]}-\bL_{[Y^+,X^+]}-\bL_{[X^+,Y^+]}\\
 &=[\bL_{X^+},\bL_{Y^+}]+\bL_{[X^+,Y^+]}=[\bL_{X^+},\bL_{Y^+}]-\bL_{[X,Y]^+},
\end{split}
\]
which at the point $p\in M$ corresponds via $\phi$ in \eqref{eq:ghTpM} to
\begin{equation}\label{eq:RXhYh}
R(X+\frh,Y+\frh)=[\bL_X,\bL_Y]-\bL_{[X,Y]}\,\in\End_\RR(\frg/\frh).
\end{equation}

\begin{remark} Equation \eqref{eq:RXhYh} shows that the connection $\nabla$ is flat (i.e., its curvature is trivial) if and only if the map $\bL:\frg\rightarrow \frgl(\frg/\frh)$ is a homomorphism of Lie algebras, that is,  $\bL$ gives a representation of $\frg$ in the quotient space $\frg/\frh$.
\end{remark}

\bigskip

\section{Nomizu's Theorem}\label{se:Nomizu}

\subsection{Reductive homogeneous spaces} 
\begin{definition}
The homogeneous space $M\simeq G/H$ is said to be \emph{reductive} if there is a decomposition as a direct sum of vector spaces $\frg= \frh\oplus\frmm$ such that $\Ad_h(\frmm)\subseteq \frmm$ for any $h\in H$. 
(Such decomposition is called a \emph{reductive decomposition}.)
\end{definition}

\begin{remark}
Because of \eqref{eq:AdtXad}, the condition $\Ad_H(\frmm)\subseteq \frmm$ implies $[\frh,\frmm]\subseteq \frmm$, and the converse holds if $H$ is connected.
\end{remark}

A big deal of information on the reductive homogneous space $M\simeq G/H$ is located in the following two products defined on $\frmm\, (\simeq \frg/\frh\simeq T_pM)$:
\begin{equation}\label{eq:binter}
\begin{split}
\textrm{Binary product:}&\ X\cdot Y\bydef [X,Y]_\frmm\\
  &\qquad\textrm{(the projection of $[X,Y]$ on $\frmm$)}\\[6pt]
\textrm{Ternary product:}&\ [X,Y,Z]\bydef [[X,Y]_\frh,Z]\\
 &\textrm{(where $[X,Y]_\frh$ denotes the projection of $[X,Y]$ on $\frh$).}
\end{split}
\end{equation}

\begin{exercise}\label{ex:LY}
Let $\frg$ be a Lie algebra, $\frh$ a subalgebra and $\frmm$ a subspace such that $\frg=\frh\oplus\frmm$ and $[\frh,\frmm]\subseteq \frmm$.
\begin{enumerate}
\item 
Prove the following properties:
\begin{description}
\item[(LY1)] $X\cdot X=0$,
\item[(LY2)] $[X,X,Y]=0$,
\item[(LY3)] $\sum_{(X,Y,Z)}\Bigl([X,Y,Z]+(X\cdot Y)\cdot Z\Bigr)=0$,
\item[(LY4)] $\sum_{(X,Y,Z)}[X\cdot Y,Z,T]=0$,
\item[(LY5)] $[X,Y,U\cdot V]=[X,Y,U]\cdot V+U\cdot [X,Y,V]$,
\item[(LY6)] $[X,Y,[U,V,W]]=[[X,Y,U],V,W]+[U,[X,Y,V],W]$\newline \null\hfill $+[U,V,[X,Y,W]]$,
\end{description}
for any $X,Y,Z,T,U,V,W\in\frmm$, where $\sum_{(X,Y,Z)}$ stands for the cyclic sum on $X,Y,Z$.

A vector space endowed with a binary and a ternary multilinear products satisfying these properties is called a \emph{Lie-Yamaguti algebra}. These algebras were named `general Lie triple systems' by Yamaguti \cite{Yam58}. The name `Lie-Yamaguti algebras' was given by Kinyon and Weinstein \cite{KW01}. Irreducible Lie-Yamaguti algebras have been studied in \cite{BEM1,BEM2}.

\smallskip

\item Prove that if $(\frmm,\cdot,[...])$ is a Lie-Yamaguti algebra, then there is a Lie algebra $\frg$ containing $\frmm$ and a subalgebra $\frh$ of $\frg$ complementing $\frmm$ with $[\frh,\frmm]\subseteq \frmm$ such that $X\cdot Y=[X,Y]_\frmm$ and $[X,Y,Z]=[[X,Y]_\frh,Z]$ for any $X,Y,Z\in\frmm$.

\noindent{\small [Hint: Let $\frh$ be the subspace of $\End(\frmm)$ spanned by the endomorphismsm $[X,Y,.]$ for $X,Y\in\frmm$, and define $\frg=\frh\oplus\frmm$ with a suitable bracket.]}
\end{enumerate}
\end{exercise}

\smallskip

\subsection{Invariant affine connections on reductive homogeneous spaces}

Let $M\simeq G/H$ be a reductive homogeneous space with reductive decomposition $\frg=\frh\oplus\frmm$. In this case we will identify the quotient space $\frg/\frh$ with $\frmm$. Hence any linear map $\bL:\frg\rightarrow \End_\RR(\frg/\frh)$ satisfying properties (a) and (b) in Proposition \ref{pr:properties_ab} is determined by the linear map
\[
\begin{split}
\tilde \bL: \frmm &\longrightarrow \End_\RR(\frmm)\\
X&\mapsto \begin{aligned}[t]
             \tilde \bL_X:\frmm &\rightarrow \frmm \\
              Y&\mapsto \tilde\phi^{-1}\bigl(\bL_{X^+}(Y^+_p)\bigr),
            \end{aligned}
\end{split}
\]
where $\tilde\phi:\frmm\rightarrow T_pM$ is the composition of the identification $\frmm\simeq\frg/\frh$ and the map $\phi:\frg/\frh\rightarrow T_pM$ in \eqref{eq:ghTpM}.

Note that the restriction of the map $\bL$ in \eqref{eq:L} to $\frh$ is determined by property (b) in Proposition \ref{pr:properties_ab}, while property (a) translates, thanks to \eqref{eq:jh}, into the condition
\begin{equation}\label{eq:tildeLhX}
\tilde \bL_{\Ad_h X}=\left.\Ad_h\right|_\frmm\circ\tilde \bL_X\circ\left.\Ad_{h^{-1}}\right|_\frmm
\end{equation}
for any $h\in H$ and $X\in\frmm$.

But any such map $\tilde \bL$ determines (and is determined by) the bilinear map
\[
\begin{split}
\alpha:\frmm\times\frmm&\longrightarrow \frmm\\
(X,Y)&\mapsto \alpha(X,Y)\bydef \tilde \bL_X(Y),
\end{split}
\]
and Equation \ref{eq:tildeLhX} is equivalent to the condition
\[
\alpha\bigl(\Ad_hX,\Ad_hY)=\Ad_h\bigl(\alpha(X,Y)\bigr)
\]
for any $h\in H$ and $X,Y\in\frmm$.

This proves the following theorem of Nomizu, which is the main result of this course:

\bigskip

\noindent\fbox{\begin{minipage}{0.98\textwidth}
\begin{theorem}[Nomizu, 1954]\label{th:Nomizu}
The invariant affine connections on a reductive homogeneous space $M\simeq G/H$ with reductive decomposition $\frg=\frh\oplus\frmm$ are in bijection with the vector space of nonassociative multiplications $\alpha:\frmm\times\frmm\rightarrow \frmm$ such that $H$ acts by automorphisms, i.e., $\left.\Ad_H\right|_\frmm\subseteq \Aut(\frmm,\alpha)$.
\end{theorem}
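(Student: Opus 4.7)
The plan is to reduce the result to Vinberg's Theorem \ref{th:nablaL} and then use the reductive decomposition to repackage the linear data into a nonassociative multiplication. Theorem \ref{th:nablaL} puts invariant affine connections on $M \simeq G/H$ in bijection with $\RR$-linear maps $L: \frg \to \End_\RR(\frg/\frh)$ obeying properties (a) and (b) of Proposition \ref{pr:properties_ab}. The task is therefore to show that, under a reductive decomposition, such $L$ correspond bijectively to $H$-equivariant bilinear products $\alpha: \frmm \times \frmm \to \frmm$.

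First I would exploit the decomposition $\frg = \frh \oplus \frmm$ to identify $\frg/\frh$ with $\frmm$ via the composition $\frmm \hookrightarrow \frg \twoheadrightarrow \frg/\frh$. Under this identification the operator $j(h)$ becomes $\Ad_h|_\frmm$ for each $h \in H$, which is well-defined precisely because $\Ad_H \frmm \subseteq \frmm$. Property (b) of Proposition \ref{pr:properties_ab} then fixes $L$ on $\frh$, namely $L_X(Y) = [X, Y]_\frmm$ for $X \in \frh$ and $Y \in \frmm$. Consequently the full map $L$ is equivalent to its restriction $\tilde L := L|_\frmm : \frmm \to \End_\RR(\frmm)$; conversely, any such $\tilde L$ extends uniquely to $\frg$ using (b). Moreover property (a) restricted to $X \in \frh$ is automatic given (b), because $\Ad_h$ is a Lie algebra automorphism preserving both $\frh$ and $\frmm$, whence $\Ad_h([X, Y]_\frmm) = [\Ad_h X, \Ad_h Y]_\frmm$ holds directly.

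It then remains to translate property (a) for $X \in \frmm$. This reads
\[
\tilde L_{\Ad_h X} = \Ad_h|_\frmm \circ \tilde L_X \circ \Ad_{h^{-1}}|_\frmm
\]
for all $h \in H$. Identifying $\tilde L$ with the bilinear product $\alpha: \frmm \times \frmm \to \frmm$ defined by $\alpha(X, Y) := \tilde L_X(Y)$, the equivariance above becomes
\[
\alpha(\Ad_h X, \Ad_h Y) = \Ad_h \alpha(X, Y),
\]
which is precisely the statement that $\Ad_h|_\frmm \in \Aut(\frmm, \alpha)$. This yields the desired bijection. The argument is essentially bookkeeping once Theorem \ref{th:nablaL} is granted; the only mildly delicate point, namely the automatic verification of (a) on $\frh$, turns entirely on the reductive condition $\Ad_H \frmm \subseteq \frmm$, so no serious obstacle is expected.
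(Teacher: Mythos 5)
Your proposal is correct and follows essentially the same route as the paper: invoke Theorem \ref{th:nablaL}, identify $\frg/\frh$ with $\frmm$ so that $j(h)$ becomes $\left.\Ad_h\right|_\frmm$, observe that property (b) pins down $L$ on $\frh$, and translate property (a) on $\frmm$ into the $H$-equivariance of $\alpha(X,Y)=\tilde L_X(Y)$. Your explicit check that property (a) is automatic for $X\in\frh$ is a detail the paper leaves implicit, but it does not change the argument.
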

\end{minipage}}

\bigskip

\begin{remark}
The vector space $\frmm$ is a module for the group $H$, and the vector space of nonassociative multiplications as in the Theorem above is naturally isomorphic to $\Hom_H(\frmm\otimes_\RR\frmm,\frmm)$. 

Moreover, the condition $\left.\Ad_H\right|_\frmm\subseteq \Aut(\frmm,\alpha)$ implies that $\left.\ad_\frh\right|_\frmm$ is contained in $\Der(\frmm,\alpha)$. The converse is valid if $H$ is connected.
\end{remark}

\begin{remark}
Nomizu \cite{Nom54} proved this result in a very different way. He defined the product $\alpha(X,Y)$ by extending locally $X$ and $Y$ to some invariant vector fields $\tilde X$ and $\tilde Y$ defined on a neighborhood of $p$ and imposing $\alpha(X,Y)\bydef\nabla_{\tilde X_p}\tilde Y\in T_pM\simeq \frmm$.
\end{remark}

The torsion and curvature tensors are determined in equations \eqref{eq:TXhYh} and \eqref{eq:RXhYh}, which now become simpler:
\begin{equation}\label{eq:TRm}
\begin{split}
T(X,Y)&=\alpha(X,Y)-\alpha(Y,X)-X\cdot Y,\\
R(X,Y)Z&=\alpha(X,\alpha(Y,Z))-\alpha(Y,\alpha(X,Z))-\alpha(X\cdot Y,Z)-[X,Y,Z],
\end{split}
\end{equation}
for any $X,Y,Z\in\frmm$, where $X\cdot Y=[X,Y]_\frmm$ and $[X,Y,Z]=[[X,Y]_\frh,Z]$ as in \eqref{eq:binter}.

\medskip

There are always two distinguished invariant affine connections in this case:
\begin{itemize}
\item \textbf{Natural connection} (or canonical connection of the first kind), given by
\[
\alpha (X,Y)=\frac{1}{2}X\cdot Y,
\]
for $X,Y\in\frmm$, which is symmetric ($T(X,Y)=0$ for any $X,Y\in\frmm$).

\smallskip

\item \textbf{Canonical connection} (of the second kind), given by $\alpha(X,Y)=0$ for any $X,Y\in\frmm$.
\end{itemize}

\begin{remark}
These two connections coincide if and only if $X\cdot Y=0$ for any $X,Y\in\frmm$, and this is equivalent to the fact that the reductive decomposition $\frg=\frh\oplus\frmm$ be a $\ZZ/2\ZZ$-grading: $\frg\subo=\frh$ is a subalgebra, $\frg\subuno=\frmm$ is a $\frg\subo$-module, and $[\frg\subuno,\frg\subuno]\subseteq \frg\subo$.

This is the case for the \emph{symmetric spaces}, which are the coset spaces $G/H$ for a Lie group $G$ endowed with an order $2$ automorphism $\sigma$ such that 
\[
G_0^\sigma\subseteq H\subseteq G^\sigma,
\]
where $G^\sigma=\{g\in G: \sigma(g)=g\}$ is the subgroup of fixed elements by $\sigma$ and $G_0^\sigma$ is the connected component of $G^\sigma$ containing the neutral element. In this case $\frh=\{X\in\frg: \sigma_*(X)=X\}$ is the Lie algebra of $H$, and $\frmm\bydef\{X\in\frg: \sigma_*(X)=-X\}$ is a complementary subspace. Since $(\sigma_*)^2=(\sigma^2)_*$ is the identity, and $\left.\sigma_*\right|_\frg$ is an automorphism, this gives a reductive decomposition $\frg=\frh\oplus\frmm$, which is clearly a $\ZZ/2\ZZ$-grading. (Gradings on Lie algebras are the subject of \cite{EKmon}.)

The invariant affine connections on symmetric spaces, and the associated algebras, have been studied in \cite{LaquerSym,BDESym}.
\end{remark}

\medskip

Nomizu's Theorem allows us to transfer geometric conditions to algebra, or algebraic conditions to geometry.

For example, given a homogeneous space $M\simeq G/H$, and a vector field $X\in\frg$, the curve $\gamma: t\mapsto \exp(tX)\cdot p$ is a geodesic with respect to an invariant affine connection $\nabla$ if and only if 
\[
\nabla_{X^+_{\exp(tX)\cdot p}}X^+=0
\]
for any $t\in\RR$. (Recall that $\dot\gamma(t)=X^+_{\exp(tX)\cdot p}$ by the definition of $X^+$.) But
\[
\nabla_{X^+_{g\cdot p}}Y^+=\tau(g)_*\left(\nabla_{(\Ad_{g^{-1}}X)^+_p}(\Ad_{g^{-1}}Y)^+\right)
\]
by Proposition \ref{pr:properties_X+}, since $\nabla$ is invariant. From
$\Ad_{\exp(-tX)}X=X$ (see \eqref{eq:AdtXad}), we check that $\nabla_{X^+_{\exp(tX)\cdot p}}X^+=0$ for any $t\in\RR$ if and only if $\nabla_{X^+_p}X^+=0$, or $\bL_{X^+}(X^+_p)=0$. 

The last condition is equivalent to $\bL_X(X+\frh)=0$, where $\bL:\frg\rightarrow \End_\RR(\frg/\frh)$ is the linear map in \eqref{eq:L}. Hence, if $M\simeq G/H$ is reductive and $\alpha$ is the multiplication on $\frmm$ determined by $\nabla$ we obtain the following result:

\begin{proposition}
Under the hypotheses above, the multiplication $\alpha$ attached to $\nabla$ is anticommutative (i.e., $\alpha(X,X)=0$ for any $X\in\frmm$) if and only if the geodesics through $p$ are exactly the curves $t\mapsto \exp(tX)\cdot p$ for $X\in\frmm$.
\end{proposition}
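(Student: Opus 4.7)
The plan is to take the reduction carried out in the paragraph just above the proposition as the starting point. There it is shown that, for any $X\in\frg$, the curve $t\mapsto\exp(tX)\cdot p$ is a geodesic with respect to $\nabla$ if and only if $L_X(X+\frh)=0$ in $\frg/\frh$. Under the identification $\frg/\frh\simeq\frmm$ coming from the reductive decomposition, and using that $\tilde L_X(Y)=\alpha(X,Y)$ for $X,Y\in\frmm$, this condition becomes precisely $\alpha(X,X)=0$. So the entire statement hinges on matching $\alpha(X,X)=0$ for all $X\in\frmm$ with the geometric assertion about geodesics through $p$.

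For the ``only if'' direction (anticommutative $\alpha$ implies the description of geodesics), first I would observe that anticommutativity of $\alpha$ forces $\alpha(X,X)=0$ for every $X\in\frmm$, and hence, by the equivalence above, each curve $\gamma_X(t)=\exp(tX)\cdot p$ with $X\in\frmm$ is a geodesic through $p$. To see that these exhaust the geodesics through $p$, I would use Proposition \ref{pr:properties_X+}(v): the map $\frg\to T_pM$, $X\mapsto X_p^+$, has kernel $\frh$, so its restriction to the complement $\frmm$ is a linear bijection onto $T_pM$. Thus the initial velocities $\dot\gamma_X(0)=X_p^+$ realize every tangent vector at $p$, and the standard ODE uniqueness of geodesics with prescribed initial point and velocity gives that every geodesic through $p$ coincides with some $\gamma_X$, $X\in\frmm$.

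For the ``if'' direction, the assumption that every curve $t\mapsto\exp(tX)\cdot p$ with $X\in\frmm$ is a geodesic already supplies $L_X(X+\frh)=0$, and hence $\alpha(X,X)=0$, for every $X\in\frmm$; by bilinearity of $\alpha$, this is equivalent to anticommutativity $\alpha(X,Y)=-\alpha(Y,X)$.

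I expect the only nontrivial point to be the bookkeeping of identifications: one must keep track of how $\frmm$ sits inside $\frg$, how $\frg/\frh$ is identified with $\frmm$, how the map $\phi$ in \eqref{eq:ghTpM} translates to $\tilde\phi:\frmm\to T_pM$, and how $\tilde L_X$ corresponds to $\alpha(X,\cdot)$. Once those identifications are made explicit, both implications follow from the reduction preceding the proposition together with Proposition \ref{pr:properties_X+}(v) and the ODE uniqueness of geodesics; no new computation is needed.
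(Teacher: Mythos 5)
Your proof is correct and takes essentially the same route as the paper: the equivalence ``$t\mapsto\exp(tX)\cdot p$ is a geodesic $\iff L_X(X+\frh)=0 \iff \alpha(X,X)=0$'' established in the paragraph preceding the proposition, combined with the fact that $X\mapsto X^+_p$ restricts to a bijection $\frmm\to T_pM$ and the uniqueness of geodesics with prescribed initial point and velocity (which the paper records in the note immediately following the proposition). The polarization step converting $\alpha(X,X)=0$ for all $X\in\frmm$ into anticommutativity is exactly what the paper's parenthetical gloss presupposes, so nothing is missing.
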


Note that given a tangent vector $v\in T_pM$ there is a unique geodesic $\mu(t)$ through $p$: $\mu(0)=p$, such that $\dot\mu(0)=v$ (in general, $\mu$ is defined only locally). The geodesics through other points are given by `translation'. Actually, if $q=g.p$, then $\tau(g^{-1})_*\nabla_{X_q^+}X^+=\nabla_{\tau(g^{-1})_*(X^+_q)}\tau(g^{-1})_*(X^+)=\nabla_{(\Ad_{g^{-1}}X)^+_p}(\Ad_{g^{-1}}X)^+_p$, so if $\nabla_{X^+_p}X^+=0$ for any $X\in \frg$, also $\nabla_{X^+_q}X^+=0$ for any  $X\in\frg$ and $q\in M$.

\begin{corollary} 
The natural connection is the only symmetric invariant affine connection such that the curves $t\mapsto \exp(tX)\cdot q$ are geodesic for any $X\in\frmm$ and $q\in M$.
\end{corollary}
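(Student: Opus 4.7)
The strategy is to translate both hypotheses into algebraic conditions on the nonassociative multiplication $\alpha:\frmm\times\frmm\to\frmm$ attached to $\nabla$ by Nomizu's Theorem, and then observe that these conditions pin down $\alpha$ uniquely.

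First, by the preceding Proposition together with the ``translation'' observation immediately before the corollary, the hypothesis that $t\mapsto \exp(tX)\cdot q$ is a geodesic for every $X\in\frmm$ and every $q\in M$ is equivalent to the anticommutativity of $\alpha$, i.e.\ $\alpha(X,X)=0$ for all $X\in\frmm$. Polarizing, this yields
\[
\alpha(X,Y)+\alpha(Y,X)=0\qquad\text{for all }X,Y\in\frmm.
\]

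Next, the symmetry hypothesis $T\equiv 0$, combined with the explicit formula \eqref{eq:TRm} for the torsion of an invariant affine connection on a reductive homogeneous space, gives
\[
\alpha(X,Y)-\alpha(Y,X)=X\cdot Y\qquad\text{for all }X,Y\in\frmm.
\]

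Adding these two linear relations gives $2\alpha(X,Y)=X\cdot Y$, hence
\[
\alpha(X,Y)=\tfrac{1}{2}X\cdot Y
\]
for all $X,Y\in\frmm$, which is precisely the nonassociative multiplication corresponding to the natural connection. Since Nomizu's Theorem gives a bijection between invariant affine connections and such multiplications, this proves uniqueness; conversely, the natural connection does satisfy both hypotheses (its $\alpha$ is clearly anticommutative, and its torsion vanishes by the same computation run backwards). There is no serious obstacle here: once the two hypotheses have been correctly rephrased via Nomizu's Theorem, the conclusion is a one-line linear algebra argument. The only mild subtlety is noting that ``$t\mapsto \exp(tX)\cdot q$ is a geodesic for all $q$'' reduces to the condition $\alpha(X,X)=0$ at the base point $p$ (and not merely that certain curves through $p$ are geodesics), which is exactly what the translation argument quoted in the paragraph preceding the corollary provides.
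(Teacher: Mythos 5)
Your proof is correct and follows exactly the paper's argument: anticommutativity of $\alpha$ (obtained from the geodesic hypothesis via the preceding Proposition and the translation remark) turns the torsion formula into $T(X,Y)=2\alpha(X,Y)-X\cdot Y$, so vanishing torsion forces $\alpha(X,Y)=\frac{1}{2}X\cdot Y$. You merely spell out the polarization and the linear-algebra step that the paper leaves implicit.
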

\begin{proof} 
If $\alpha$ is anticommutative, then we have $T(X,Y)=2\alpha(X,Y)-X\cdot Y$. So the connection is symmetric (zero torsion) if and only if $\alpha(X,Y)=\frac{1}{2}X\cdot Y$.
\end{proof}

\begin{exercise} 
Let $M\simeq G/H$ be a homogeneous space endowed with an invariant pseudo-metric $g$. This means that $M$ is pseudo-Riemannian with the pseudo-metric $g$ satisfying
\[
g(\tau(x)_*(u),\tau(x)_*(v))_{x\cdot q}=g(u,v)_q
\]
for any $x\in G$ and $u,v\in T_qM$. Hence, for any $q\in M$, $X\in\frg$, and $U,V\in\cX(M)$ we have:
\[
\begin{split}
X^+_q\bigl(g(U,V)\bigr)
 &=\left.\frac{d\ }{dt}\right|_{t=0}g(U,V)_{\exp(tX)\cdot q}\\
 &=\left.\frac{d\ }{dt}\right|_{t=0}g\bigl(\tau(\exp(-tX))_*(U),\tau(\exp(-tX))_*(V)\bigr)_q\\
 &=g\bigl([X^+,U],V\bigr)_q+g\bigl(U,[X^+,V]\bigr)_q\quad\textrm{(see \eqref{eq:XYPhi}).}
\end{split}
\]
In particular, for $X\in\frh$, $X^+_p=0$, so $g\bigl([X^+,U],V\bigr)_p + g\bigl(U,[X^+,V]\bigr)_p=0$.
\begin{enumerate}
\item Prove that the Levi-Civita connection of $(M,g)$ is invariant.
\item Check that the Nomizu operator $\bL_{X^+}$ for $X\in\frg$ satisfies
\[
g\bigl(\bL_{X^+}(U),V)\bigr)+g\bigl(U,\bL_{X^+}(V)\bigr)=0
\]
for any $U,V\in\cX(M)$.
\item Conclude that for $X,Y,Z\in\frg$ we have
\[
2g\bigl(\bL_{X^+}(Y^+),Z^+\bigr)=
g\bigl([X,Y]^+,Z^+\bigr)-g\bigl([X,Z]^+,Y^+\bigr)-g\bigl(X^+,[Y,Z]^+\bigr).
\]
\item Prove that if $\bL:\frg\rightarrow\End_\RR(\frg/\frh)$ is the associated linear map in Theorem \ref{th:nablaL} and we identify $g_p:T_pM\times T_pM\rightarrow\RR$ with a bilinear map
\[
\bar g:\frg/\frh\times\frg/\frh\rightarrow\RR
\]
by means of \eqref{eq:ghTpM}, then for any $X,Y,Z\in\frg$ we get
\begin{multline*}
2\bar g\bigl(\bL_X(Y+\frh),Z+\frh\bigr)\\
=\bar g\bigl([X,Y]+\frh,Z+\frh\bigr)-\bar g\bigl(Y+\frh,[X,Z]+\frh\bigr)
 -\bar g\bigl(X+\frh,[Y,Z]+\frh\bigr).
\end{multline*}
\item Deduce that if $M$ is reductive with reductive decomposition $\frg=\frh\oplus\frmm$, and we identify $\bar g$ with a bilinear map (also denoted by $\bar g$) $\frmm\times\frmm\rightarrow\RR$, the multiplication $\alpha$ in $\frmm$ attached by Nomizu's Theorem \ref{th:Nomizu} with the Levi-Civita connection on $M$ is determined by the equation:
\[
2\bar g\bigl(\alpha(X,Y),Z\bigr)=\bar g(X\cdot Y,Z)-\bar g(X\cdot Z,Y)-\bar g(X,Y\cdot Z)
\]
for any $X,Y,Z\in\frmm$. Conclude from (2) that
\[
\bar g(\alpha(X,Y),Z)+\bar g(Y,\alpha(X,Z))=0
\]
for any $X,Y,Z\in\frmm$.

\item Since the Levi-Civita connection is symmetric, $\alpha(X,Y)=\frac{1}{2}X\cdot Y+ \mu(X,Y)$, where $\mu(X,Y)$ is a commutative multiplication in $\frmm$. Prove that this commutative multiplication is determined by the equation:
\[
2\bar g\bigl(\mu(X,Y),Z\bigr)=\bar g(Z\cdot X,Y)+\bar g(X,Z\cdot Y)
\]
for any $X,Y,Z\in\frmm$.
\item Prove that the Levi-Civita connection coincides with the natural connection if and only if
\[
\bar g(Y\cdot X,Z)=\bar g(Y,X\cdot Z)
\]
for any $X,Y,Z\in\frmm$ (i.e., $\bar g$ is associative relative to the product $X\cdot Y$). In this case $M$ is said to be a \emph{naturally reductive homogeneous manifold} (see \cite{Nagai97}).
\end{enumerate}
\end{exercise}

\bigskip

\section{Examples}\label{se:examples}

\subsection{Left invariant affine connections on Lie groups}
Any Lie group $G$ is a reductive homogeneous space under the action of $G$ into itself by left multiplication. The isotropy subgroup at any point is trivial. Hence $G$ is a reductive homogeneous space with $\frh=0$ and $\frmm=\frg$. Also, for any $X\in\frg$, the vector field $X^+$ is nothing else but the right invariant vector field $\hat X\in\frg_\textrm{right}$ with $\hat X_e=X_e$ (see Proposition \ref{pr:XXhat}).

According to Nomizu's Theorem (Theorem \ref{th:Nomizu}) we have a bijection
\begin{align*}
\left\{\begin{matrix} \textrm{Left invariant affine}\\
\textrm{connections on $G$}\end{matrix}\right\}
&\longleftrightarrow
\left\{\begin{matrix}\textrm{bilinear maps}\\
\alpha:\frg\times\frg\rightarrow\frg\end{matrix}\right\}\\
\nabla\qquad &\mapsto\qquad \alpha(X,Y)\bydef \phi^{-1}\Bigl((\bL_{X^+}(Y^+)_e\Bigr)
\end{align*}
where $\phi:\frg\rightarrow T_eM$, $X\mapsto X_e$, as in \eqref{eq:ghTpM}.

For $X,Y\in\frg$ we get
\[
\bigl(\bL_{X^+}(Y^+)\bigr)_e=\bL_{\hat X}(\hat Y_e)=\bL_{\hat X}(Y_e)=\bigl(\bL_{\hat X}(Y)\bigr)_e=\Bigl( \nabla_{\hat X}Y-[\hat X,Y]\Bigr)_e,
\]
and
\[
\bigl(\nabla_{\hat X}Y\bigr)_e=\nabla_{\hat X_e}Y=\nabla_{X_e}Y=\bigl(\nabla_XY\bigr)_e.
\]
Besides, $\nabla_XY\in \frg$ by left invariance, and $[\hat X,Y]=0$ since the corresponding flows commute: the flow of $\hat X$ is $L_{\exp(tX)}$ and the flow of $Y$ is $R_{\exp(tY)}$ and left and right multiplications commute by associativity of the multiplication in the group $G$. 

Here we use the following result:

\begin{lemma}
Let $M$ be a manifold, $X,Y\in\cX(M)$ with flows $\Phi_t$ and $\Psi_t$ respectively. 
If $\Phi_t\circ\Psi_s=\Psi_s\circ\Phi_t$  for some 
$\epsilon>0$ and $-\epsilon<s,t<\epsilon$, then $[X,Y]=0$.
\end{lemma}
\begin{proof}
Just notice that for any $f\in\cC^\infty(M)$ and any point $q\in M$
\[
\begin{split}
[X,Y]_q(f)&=\left.\frac{d\ }{dt}\right|_{t=0}\Bigl((\Phi_{-t})_*(Y)\Bigr)_q(f)
 =\left.\frac{d\ }{dt}\right|_{t=0}Y_{\Phi_t(q)}(f\circ\Phi_{-t})\\
 &=\left.\frac{d\ }{dt}\right|_{t=0}\left(\left.\frac{d\ }{ds}\right|_{s=0}
 \bigl(f\circ\Phi_{-t}\circ\Psi_s(\Phi_t(q))\bigr)\right)\\
 &=\left.\frac{d\ }{dt}\right|_{t=0}\left(\left.\frac{d\ }{dt}\right|_{t=0}
 \bigl(f\circ\Phi_s(q)\bigr)\right)
 \quad\textrm{(because $\Phi_{-t}$ and 
 $\Psi_s$ commute)}\\
 &=\left.\frac{d\ }{dt}\right|_{t=0}\bigl(Y_q(f)\bigr)=0
 \quad\textrm{(as $Y_q(f)$ does not depend on $t$).\qedhere}
\end{split}
\]
\end{proof}

\medskip

Therefore $\bigl(\bL_{X^+}(Y^+)\bigr)_e=\bigl(\nabla_XY\bigr)_e$ and $\phi^{-1}\bigl((\bL_{X^+}(Y^+)_e\bigr)=\nabla_XY$. Hence the bijection in Nomizu's Theorem becomes a very natural one:
\begin{equation}\label{eq:Nomizu_left}
\begin{aligned}
\left\{\begin{matrix} \textrm{Left invariant affine}\\
\textrm{connections on $G$}\end{matrix}\right\}
&\longleftrightarrow
\left\{\begin{matrix}\textrm{bilinear maps}\\
\alpha:\frg\times\frg\rightarrow\frg\end{matrix}\right\}\\[4pt]
\nabla\qquad &\mapsto\qquad \alpha(X,Y)=\nabla_XY
\end{aligned}
\end{equation}

\begin{remark}\label{re:Nomizu_left}
\begin{itemize} 
\item The canonical connection is then the connection that satisfies $\nabla_XY=0$ for any $X,Y\in \frg$. That is, it is the affine connection in which the left invariant vector fields are parallel.

\item A left invariant affine connection $\nabla$ on a Lie group $G$ is symmetric if and only if $\nabla_XY-\nabla_YX=[X,Y]$ for any $X,Y\in\frg$. This means that the algebra $(\frg,\alpha)$ (with multiplication $\alpha(X,Y)=\nabla_XY$) is \emph{Lie-admissible} with associated Lie algebra $\frg$: 

A nonassociative algebra $(A,\alpha)$ is said to be \emph{Lie-admissible} if the new algebra defined on $A$ but with multiplication $\alpha^-(x,y)\bydef \alpha(x,y)-\alpha(y,x)$ is a Lie algebra (the associated Lie algebra). For instance,  associative algebras and  Lie algebras are Lie-admissible.

\item A left invariant affine connection $\nabla$ on a Lie group $G$ is flat if and only if $[\nabla_X,\nabla_Y]=\nabla_{[X,Y]}$ for any $X,Y\in\frg$. 
\end{itemize}
\end{remark}

\begin{exercise}\label{exe:leftsymmetric}
A nonassociative algebra $(A,\alpha)$ is \emph{left-symmetric} if the associator, defined as the trilinear map $(x,y,z)\bydef \alpha(\alpha(x,y),z)-\alpha(x,\alpha(y,z))$ (the associator measures the lack of associativity of $\alpha$!), is symmetric in the first two components. 
\begin{enumerate}
\item
Prove that any left-symmetric algebra is Lie-admissible.

\item Deduce that a left invariant affine connection on a Lie group $G$ is symmetric and flat if and only  if the corresponding nonassociative algebra $(\frg,\alpha)$ (where $\alpha(X,Y)=\nabla_XY$) is left-symmetric and the algebra $(\frg,\alpha^-)$ is just the Lie algebra $\frg$. (Therefore, the left invariant symmetric (i.e., torsion-free) and flat affine connections on a Lie group are in a natural bijection to the set of left-symmetric algebra structures on the corresponding Lie algebra.)
\end{enumerate}
\end{exercise}

The reader may consult the survey paper \cite{Burde} for more results on (and applications of) left-symmetric algebras.

\bigskip

\subsection{Bi-invariant affine connections on Lie groups}

Let $\nabla$ be a left invariant affine connection on a Lie group $G$. Then $\nabla$ is also right invariant if for any $g\in G$ and $X,Y\in\frg$, 
\[
(R_{g^{-1}})_*(\nabla_XY)=\nabla_{(R_{g^{-1}})_*(X)}(R_{g^{-1}})_*(Y).
\]
But by left invariance $(R_{g^{-1}})_*(X)=(R_{g^{-1}})_*\circ (L_g)_*(X)=\Ad_gX$, and similarly for $Y$ and $\nabla_XY$. Hence $\nabla$ is bi-invariant if and only if
\[
\Ad_g(\nabla_XY)=\nabla_{\Ad_gX}(\Ad_gY)
\]
for any $g\in G$ and $X,Y\in\frg$. Therefore the bijection in \eqref{eq:Nomizu_left} restricts to a bijection
\begin{equation}\label{eq:Nomizu_bi}
\begin{aligned}
\left\{\begin{matrix} \textrm{Bi-invariant affine}\\
\textrm{connections on $G$}\end{matrix}\right\}
&\longleftrightarrow
\left\{\begin{matrix}\textrm{bilinear maps}\\
\alpha:\frg\times\frg\rightarrow\frg\\
\textrm{with $\Ad_G\subseteq \Aut(\frg,\alpha)$}\end{matrix}\right\}\\[4pt]
\nabla\qquad &\mapsto\qquad \alpha(X,Y)=\nabla_XY.
\end{aligned}
\end{equation}
Note that the space on the right can be canonically identified with $\Hom_G(\frg\otimes_\RR\frg,\frg)$, which is contained in $\Hom_\frg(\frg\otimes_\RR\frg,\frg)$ (and they coincide if $G$ is connected).

\begin{remark}
\begin{itemize} 
\item If $\nabla$ is a symmetric bi-invariant affine connection on a Lie group $G$, and $\alpha:\frg\times\frg\rightarrow\frg$ is the associated multiplication ($\alpha(X,Y)=\nabla_XY$), then we know from Remark \ref{re:Nomizu_left} that $(\frg,\alpha)$ is Lie-admissible with associated Lie algebra $\frg$. But here $\alpha$ lies in $\Hom_\frg(\frg\otimes_\RR\frg,\frg)$. This means that 
\[
[X,\alpha(Y,Z)]=\alpha([X,Y],Z)+\alpha(Y,[X,Z])
\] 
for any $X,Y,Z\in\frg$. That is, $\ad_X$ is a derivation of $(\frg,\alpha)$. Then the algebra $(\frg,\alpha)$ is flexible and Lie-admissible.
(See Exercise \ref{exe:flexibleLA}). Flexible Lie-admissible algebras are studied in \cite{Myung}.

\item If $\nabla$ is a symmetric and flat bi-invariant affine connection on a Lie group $G$, and $\alpha:\frg\times\frg\rightarrow\frg$ is the associated multiplication: $\alpha(X,Y)=\nabla_XY$, then we know from Remark \ref{re:Nomizu_left} that $(\frg,\alpha)$ is left-symmetric and flexible, but then the algebra $(\frg,\alpha)$ is associative \cite{Medina81}. (See Exercise \ref{exe:flexibleLA}.)
\end{itemize}
\end{remark}

\begin{exercise}\label{exe:flexibleLA}
Let $(A,\alpha)$ be a nonassociative algebra. By simplicity, write $\alpha(x,y)=xy$ for any $x,y\in A$. Recall from Exercise \ref{exe:leftsymmetric} that the associator of the elements $x,y,z\in A$ is the element $(x,y,z)=(xy)z-x(yz)$. The algebra $(A,\alpha)$ is said to be \emph{flexible} if $(x,y,x)=0$ for any $x,y\in A$. That is, the associator is alternating in the first and third arguments. 
\begin{enumerate}
\item Assume that this algebra is Lie-admissible (i.e.; the bracket $[x,y]\bydef xy-yx$ satisfies the Jacobi identity). Prove that it is flexible if and only if the map $\ad_x:y\mapsto [x,y]$ is a derivation of $(A,\alpha)$ for any $x\in A$: $[x,yz]=[x,y]z+y[x,z]$ for any $x,y,z\in A$.

\item Check that the Lie algebra $\frsu(n)$ of the special unitary group $SU(n)$, endowed with the multiplication given by:
\[
\alpha(X,Y)=\mu XY-\bar \mu YX-\frac{\mu-\bar\mu}{n}\trace(XY),
\]
where $\mu\in\CC$ satisfies $\mu+\bar \mu=1$, is a flexible and Lie-admissible algebra.\\
This example plays a key role in the classification of the invariant connections on compact simple Lie groups by Laquer \cite{Laquer}.

\item Prove that an algebra $(A,\alpha)$ is flexible and left-symmetric if and only if it is associative.
\end{enumerate}
\end{exercise}

\medskip

Simple non-Lie Malcev algebras \cite{EM_S7}, and other interesting nonassociative algebras \cite{BDE_G2,DGP,EM_S15,EMColor,Sagle_Jordan}, appear too related to some specific reductive homogeneous spaces. 

\medskip

Nomizu's Theorem has been extended in order to study invariant affine connections on principal fibre bundles which admits a fibre-transitive Lie group of automorphisms \cite{Wang}, but this is out of the scope of this introductory survey.

\bigskip

\end{document}